\begin{document}
 \title{Error estimate of the nonuniform $L1$ type formula for the time fractional diffusion-wave equation}


          \author{Hong Sun\thanks{Department of Mathematics and Physics,
Nanjing Institute of Technology, Nanjing 211167; School of Mathematics, Southeast University, Nanjing 210096, P. R. China (sunhongzhal@126.com).}
          \and Yanping Chen\thanks{Corresponding author. School of Mathematical Sciences, South China Normal University, Guangzhou 510631, Guangdong, P. R. China (yanpingchen@scnu.edu.cn).}
          \and Xuan Zhao\thanks{School of Mathematics, Southeast University, Nanjing 210096, P. R. China (xuanzhao11@seu.edu.cn).}}

         \pagestyle{myheadings} \markboth{Error estimate of the nonuniform $L1$ type formula}{H. Sun, Y.P.Chen and X. Zhao} \maketitle
\begin{abstract}
 In this paper, a temporal nonuniform $L1$ type difference scheme is built up for the time fractional diffusion-wave equation with the help of the order reduction technique. The unconditional convergence of the nonuniform difference scheme is proved rigorously in $L^2$ norm. Our main tool is the discrete complementary convolution kernels with respect to the coefficient kernels of the L1 type formula. The positive definiteness of the  complementary convolution kernels is shown to be vital to the stability and convergence. To the best of our knowledge, this property is proved at the first time on the nonuniform time meshes. Two numerical experiments are presented to verify the accuracy and the efficiency of the proposed numerical methods.

          \end{abstract}
\begin{keywords} diffusion-wave equation; weak singularity; nonuniform mesh; unconditional convergence
\end{keywords}

 \begin{AMS} 65M06; 65M12; 65M15
\end{AMS}

\section{Introduction}\label{intro}
In recent years, various phenomena in science and engineering are modeled by the fractional differential equations (FDEs), which in some cases are selected to describe the memorial and the hereditary properties of many viscoelastic materials, economics and dynamics of interfaces between nanoparticle and substrates\cite{Mainardi, Met, Berk, zhou}. Much attention has been placed on the development and {the} research of fractional differential equations \cite{G-G,S-R,Sun-Chen-2014}. However, finding the analytical solutions is difficult for most FDEs due to the fundamental nonlocal property of the fractional derivatives. Existing works are devoted to develop the efficient numerical algorithms for FDEs considering even the singularity of the problem.

Gr\"{u}nwald-Letnikov (GL) formula is a nature way used to compute the fractional derivative in the beginning. Oldham and Spanier developed the first-order GL formula  based on the Gr\"unwald-Letnikov derivative \cite{Old1974} to numerically solve the FDEs. Yuste et al.\cite{Yuste1,Yuste2} presented the explicit and weighted averaged finite difference schemes using the shifted GL formula for the time fractional diffusion-wave equation. In order to improve the accuracy of the approximation, a weighted and shifted Gr\"{u}nwald difference (WSGD) operator with second-order accuracy was presented for solving space fractional diffusion equations in \cite{Tian-Zhou}. Consequently, the WSGD formula is applied to solve the time fractional sub-diffusion or diffusion-wave equation \cite{Wang-Vong,Dimitrov1}.

Besides, some commonly used numerical methods with higher accuracy to approximate the Caputo derivative are derived from the idea of the interpolations. A widely used method with $2-\alpha$ order accuracy, called $L1$ formula in \cite{Sun-Wu, Lin-Xu}, was proposed by using the linear interpolation for the time fractional sub-diffusion equation. By combining $L1$ formula with order reduction method, the authors \cite{Sun-Wu} presented a difference scheme to approximate the time fractional diffusion-wave equation.
Li et.al \cite{Li-Zhao} applied the center difference scheme to approximate the second order derivative in the Caputo derivative of order $\beta\in [1,2].$In \cite{Gao-Sun}, the authors constructed the $L1$-2 formula to discretize the Caputo derivative with order $O(\tau^{3-\alpha})$ by using the quadratics interpolation on all intervals except the linear interpolation on the interval $[t_0, t_1].$
Alikhanov \cite{Alik} proposed the $L2$-$1_\sigma$ formula with $3-\alpha$ order accuracy at the point $t=t_{n+\sigma} (\sigma=1-\frac{\alpha}{2}).$ The formula applied the quadratics interpolation on all intervals while the linear interpolation on the last interval $[t_n, t_{n+\sigma}]$.
In \cite{Sun-Sun}, by using order of reduction, Sun et al. applied the $L2$-$1_\sigma$ formula to discrete the time fractional diffusion-wave equation.
Xu et al.\cite{Xu} presented a high-order finite difference scheme to approximate the Caputo fractional derivative by applying the quadric interpolation polynomial on all intervals. The scheme achieves $(3-\alpha)$-order accuracy in time.

It is noteworthy that the existing numerical analysis in the above proposed numerical algorithms, especially for high order formulas, are valid under the smooth solution hypothesis.
The error estimate based on the $L1$ formula or $L2$-$1_\sigma$ requires the solution of the time fractional differential equation to belong to
$C^2[0, T]$ or $C^3[0,T]$ in time. However, the weak singularity of the fractional Caputo operator at $t=0$ cannot be avoided in the time fractional differential equation, which
 implies that the regularity condition of the solution is restrictive even for the homogeneous problem with a smooth initial data \cite{Sakamoto,Jin2016,Jin2016Siam,Stynes2016,Stynes2017}. The low regularity brings low-accuracy numerical solutions and large computational cost on the temporal uniform mesh.

 Furthermore, the initial singularity has been taken into consideration by many researchers for the time fractional equation. In order to overcome the lost of accuracy caused by the low regularity, some efficient methods are implemented on the nonuniform meshes for the time fractional sub-diffusion equation. Stynes et al. \cite{Stynes2017} presented a difference scheme by using $L1$ formula on the graded meshes for the time fractional diffusion equation. The stability and convergence of the scheme are proved rigorously. They demonstrated that the regularity of the solution and the grading of the mesh affected the order of convergence of the difference scheme. Consequently, there are many numerical algorithms for the time fractional diffusion equation based on the $L1$ formula on the graded meshes, such as the finite element method \cite{Huang-Chen},
 the spectral method \cite{Ren-Chen}, the discontinuous Galekin method \cite{Ren-Huang}, the fast finite difference method \cite{Shen-Sun}. In \cite{Chen1}, the Caputo derivative was approximated by $L2$-$1_\sigma$ formula on the graded mesh. Then a finite difference method was presented for the time-fractional sub-diffusion equation. Under reasonable assumption, the temporal convergence order of the fully scheme is proved to be $ O(N^{-\min\{r\alpha,2\}}).$ There are also existing works devoted to the construction of the numerical schemes on the nonuniform meshes and the adaptive meshes for the sub-diffusion equations\cite{Yuste3,Liao-Li,Roul,Jannelli}.

Whereas, there are relatively few literatures on approximating the time fractional diffusion-wave equation compared to that for the sub-diffusion equation. Shen et al. \cite{Shen-Stynes-Sun} presented the modified $L1$ approximation
by virtue of the order reduction on the graded meshes for the time fractional diffusion-wave equation. The stability and convergence of the scheme
are also analysed under an assumption condition.
 Lyu and Vong \cite{Lyu-Vong1} constructed a temporal nonuniform $L2$ formula (the same as the modified $L1$ formula above)
for the Caputo derivative of order $\beta~(1<\beta<2)$. Based on this formula, a linearized difference scheme was presented for the time-fractional
Benjamin-Bona-Mahony-type equation by the mathematical induction.
In \cite{Lyu-Vong2}, a symmetric fractional order reduction method was introduced to construct $L1$ scheme and $L2$-$1_\sigma$ scheme on the nonuniform temporal meshes for the semilinear fractional diffusion-wave equations, respectively. By use of the mathematical induction method, the convergence is obtained by $H^2$ energy method.

 It is noted that the analysis on the convergence of the proposed scheme is a challenge
for the time fractional diffusion-wave equation. In this paper, we construct the temporal nonuniform difference scheme \cite{Shen-Stynes-Sun}
by combining the order of reduction with the modified $L1$ formula for the time fractional diffusion-wave. The aim of this paper is to present a method different from \cite{Shen-Stynes-Sun,Lyu-Vong1,Lyu-Vong2} for the convergence of the difference
scheme. It relies on a useful discrete tool: the discrete complementary
 convolution (DCC) kernels \cite{Liao-Li} generated by the discrete convolution kernels of the modified $L1$ formula. The properties of the DCC kernels plays important roles in proving the convergence of the difference scheme. With the help of the verified positive definiteness of the DCC kernels, we demonstrate the $L^2$ error estimate of the difference scheme.

 We consider the two dimensional fractional diffusion-wave equation in the following form
\begin{align}
&{}_0^CD_t^\beta u(\mathrm{x},t)=\Delta u(\mathrm{x},t)+f({\rm x},t),~~{\rm x}\in\Omega,~t\in(0, T],\label{diffusion-equation}\\
&u({\rm x},0)=\varphi_1({\rm x}),~~u_t({\rm x},0)=\varphi_2({\rm x}),~~{\rm x}\in\bar{\Omega},\label{boundaryvalue}
\end{align}
subjected to the periodic boundary conditions, where $\Omega=(0,L)^2\subset \mathcal{R}^2$ and
${}_0^CD_t^\beta u(t),$ $(1<\beta<2)$  denotes Caputo
fractional derivative of order $\beta$ defined by
$$ {}_0^CD_t^\beta u(t)=\int_0^t
 \omega_{2-\beta}(t-s)u''(s)ds, ~~{\rm where}~~ \omega_\gamma(t)=\frac{t^{\gamma-1}}{\Gamma(\gamma)}.$$

The rest of the paper is arranged as follows. In Section \ref{Sec.2}, the discrete $L1$ type formula on the nonuniform meshes is presented for the Caputo derivative of order $0<\alpha<1$. The useful properties of the coefficient kernels of the proposed scheme are discussed. The DCC kernels are introduced with some proved properties. Section \ref{Sec.3} is devoted to constructing the temporal nonuniform difference scheme. The unconditional convergence of
the scheme in $L^2$ norm is proved rigorously in Section \ref{Sec.4}.
 In Section \ref{Sec.5}, two numerical examples are provided to verify the theoretical results.
The paper ends with a brief conclusion in Section \ref{Sec.6}.

\section{The discrete formula of the Caputo derivative}\label{Sec.2}

 For the nonuniform time levels $0=t_0<t_1<t_2<\cdots<t_N=T,$ we denote $\tau_n=t_n-t_{n-1}$
as the $n$th step size and $\tau_{n-\frac{1}{2}}=\frac{\tau_n+\tau_{n-1}}{2}$ for $n>1$, $\tau_{\frac{1}{2}}=\frac{\tau_1}{2}.$
 Denote $t_{n-\frac{1}{2}}=t_{n-1}+\frac{\tau_n}{2}$ for $n>1$
and $t_{-\frac{1}{2}}=t_0.$
Let $v^k\approx v(t_n),~~\nabla_\tau v^n=v^n-v^{n-1},~~\delta_tv^{n-\frac{1}{2}}=\nabla_\tau v^n/\tau_n$ and
$v^{n-\frac{1}{2}}=\frac{1}{2}(v^n+v^{n-1}).$
In this paper, the basic assumption on the nonuniform time meshes is as follows
\begin{align}
\tau_{n-1}\le \tau_n,~~2\le n\le N.\label{step-condition}
\end{align}
 Now, we present the approximation formula for the Caputo derivative of order $\alpha~(0< \alpha\le1)$ at the point $t_{n-\frac{1}{2}}.$
 Let $\Pi_{1,k} v(t)$ be linear interpolation of $v(t)$ over the small interval $[t_{k-\frac{3}{2}}, t_{k-\frac{1}{2}}] $ for $1\le k\le n.$ Then we get
 \begin{align}
{}_0^CD_t^\alpha v(t_{n-\frac{1}{2}})=&\sum_{k=1}^{n}\int_{t_{k-\frac{3}{2}}}^{t_{k-\frac{1}{2}}}\omega_{1-\alpha}(t_{n-\frac{1}{2}}-s)v'(s)ds\nonumber\\
\approx&
\sum_{k=1}^{n}\frac{1}{\tau_{k-\frac{1}{2}}}\int_{t_{k-\frac{3}{2}}}^{t_{k-\frac{1}{2}}}
\omega_{1-\alpha}(t_{n-\frac{1}{2}}-s)(v^{k-\frac{1}{2}}-v^{k-\frac{3}{2}})ds\nonumber\\
=&\sum_{k=1}^{n}a_{n-k}^{(n)}\nabla_\tau v^{k-\frac{1}{2}}:=\mathcal{D}_{\tau}^\alpha v^{n-\frac{1}{2}}, ~~n\ge1,\label{appr-formula}
\end{align}
 where $\nabla_\tau v^{\frac{1}{2}}=\frac{1}{2}(v^{1}-v^0)$ and
\begin{align}
a_{n-k}^{(n)}=\frac{1}{\tau_{k-\frac{1}{2}}}\int_{t_{k-\frac{3}{2}}}^{t_{k-\frac{1}{2}}}
\omega_{1-\alpha}(t_{n-\frac{1}{2}}-s)ds,~1\le k\le n.\label{coefficient}
\end{align}
The properties of the discrete coefficient kernels $a_{n-k}^{(n)}$ are stated in the following lemma. These properties are necessary to prove
the theoretical results of the numerical methods for the fractional diffusion-wave equation.
\begin{lemma}\label{property-a}
For any $\alpha$ and $\{a_{n-k}^{(n)}\}$ defined in (\ref{coefficient}), it holds that \\
{\rm (I)} ~~$a_k^{(n)}>0,~~0\le k\le n-1, ~~a_{k}^{(n)}\le a_{k-1}^{(n)},~~1\le k\le n-1,$\\
{\rm (II)}~~$a_{k}^{(n)}\le a_{k-1}^{(n-1)},~~a_{k-1}^{(n-1)}a_{k+1}^{(n)}\ge a_k^{(n-1)}a_k^{(n)},~~1\le k\le n-2,$\\
{\rm(III)}~~$a_{k}^{(n)}<a_{k}^{(n-1)},~~0\le k\le n-2,$\\
{\rm (IV)}~~$\frac{a_0^{(k)}}{a_{k-2}^{(k)}}<\frac{(t_{k-\frac{1}{2}}-t_{\frac{1}{2}})^\alpha}{(1-\alpha)(\tau_{k-\frac{1}{2}})^\alpha},~~2\le k\le n,$\\
{\rm (V)}~~$0<\omega_{1-\alpha}(t_{n-\frac{1}{2}}-t_{k-\frac{1}{2}})-\omega_{1-\alpha}(t_{n-\frac{1}{2}}-t_{k-\frac{3}{2}})
\le a_{n-k-1}^{(n)}-a_{n-k}^{(n)},~~2\le k\le n-1.$\\
\end{lemma}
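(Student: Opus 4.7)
The plan rests on three qualitative features of the kernel $\omega_{1-\alpha}(t)=t^{-\alpha}/\Gamma(1-\alpha)$ on $(0,\infty)$: strict positivity, strict monotone decrease, and strict convexity. For each of (I)--(V), I would first produce the right pointwise comparison of integrands and then integrate, exploiting the mesh monotonicity (\ref{step-condition}) where needed.

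Parts (I) and the first inequality of (II) are essentially book-keeping. Positivity is immediate from $\omega_{1-\alpha}>0$; the monotonicity $a_k^{(n)}\le a_{k-1}^{(n)}$ holds because, after reindexing via $j=n-k$ in (\ref{coefficient}), increasing $j$ shifts the interval of integration farther from $t_{n-1/2}$, where the decreasing kernel is smaller. For the first inequality of (II), $a_k^{(n)}$ and $a_{k-1}^{(n-1)}$ are integrals over the \emph{same} half-step interval, weighted by $\omega_{1-\alpha}(t_{n-1/2}-s)$ and $\omega_{1-\alpha}(t_{n-3/2}-s)$ respectively, and $t_{n-3/2}<t_{n-1/2}$ makes the latter pointwise larger. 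The second inequality of (II) I would reduce to the pointwise total-positivity bound
\[
\omega_{1-\alpha}(t_1-s_2)\,\omega_{1-\alpha}(t_2-s_1) \ge \omega_{1-\alpha}(t_1-s_1)\,\omega_{1-\alpha}(t_2-s_2) \quad (s_1\le s_2,\ t_1\le t_2),
\]
which follows by raising both sides of the identity $(t_1-s_1)(t_2-s_2)-(t_1-s_2)(t_2-s_1)=(t_2-t_1)(s_2-s_1)\ge 0$ to the $-\alpha$ power. Integrating over the product of the two consecutive half-step intervals yields $a_{k-1}^{(n-1)}a_{k+1}^{(n)}\ge a_k^{(n-1)}a_k^{(n)}$.

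For (III), the substitutions $\sigma=t_{n-k-1/2}-s$ and $\sigma=t_{n-k-3/2}-s$ put both averages into the form $\tfrac{1}{L}\int_0^L \omega_{1-\alpha}(D+\sigma)\,d\sigma$, and (\ref{step-condition}) gives simultaneously $D_n-D_{n-1}=\tau_{n-1/2}-\tau_{n-k-1/2}\ge 0$ and $\tau_{n-k-3/2}\le \tau_{n-k-1/2}$. Two cooperating effects then yield $a_k^{(n)}\le a_k^{(n-1)}$: the pointwise kernel bound $\omega_{1-\alpha}(D_n+\sigma)\le \omega_{1-\alpha}(D_{n-1}+\sigma)$, and the fact that averaging a decreasing function over a longer interval shrinks the mean. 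For (IV), direct computation gives $a_0^{(k)}=\tau_{k-1/2}^{-\alpha}/\Gamma(2-\alpha)$, while $a_{k-2}^{(k)}\ge \omega_{1-\alpha}(t_{k-1/2}-t_{1/2})$ by bounding its integrand below by its minimum on $[t_{1/2},t_{3/2}]$; forming the ratio and using $\Gamma(1-\alpha)/\Gamma(2-\alpha)=1/(1-\alpha)$ finishes the estimate.

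The delicate step, and the main obstacle, is (V). I would set $f(s)=\omega_{1-\alpha}(t_{n-1/2}-s)$, which is positive, strictly increasing, and strictly convex on $(-\infty,t_{n-1/2})$. Affinely rescaling each integral onto $r\in[0,1]$ recasts the target as the pointwise-in-$r$ inequality
\[
f(t_{k-1/2}+r\tau_{k+1/2})-f(t_{k-1/2}) \;\ge\; f(t_{k-3/2}+r\tau_{k-1/2})-f(t_{k-3/2}).
\]
The structural fact to invoke is that $(x,h)\mapsto f(x+h)-f(x)$ is non-decreasing in $x$ (convexity of $f$ makes secant slopes non-decreasing) and non-decreasing in $h\ge 0$ (since $f'\ge 0$). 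With $t_{k-3/2}<t_{k-1/2}$ and $\tau_{k-1/2}\le \tau_{k+1/2}$ supplied by (\ref{step-condition}), both monotonicities act in the desired direction; integrating in $r$ completes the proof. The left-hand positivity in (V) is immediate from the strict decrease of $\omega_{1-\alpha}$.
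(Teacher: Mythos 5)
Your proof is correct, and for the two substantive parts --- the second inequality of (II) and part (V) --- it takes a genuinely different route from the paper. The paper runs everything through mean-value theorems: (I) via the integral mean value theorem, all of (II) and (V) via Cauchy mean-value theorems applied to auxiliary functions $b_{n,k}(\theta)$ and $c_k(\theta)$ (applied twice in the case of (V)), and (III) via the differential mean value theorem after rescaling each integral to $[0,1]$. You instead isolate two structural properties of the kernel: the second inequality of (II) is the total-positivity ($\mathrm{TP}_2$) bound $\omega_{1-\alpha}(t_1-s_2)\,\omega_{1-\alpha}(t_2-s_1)\ge\omega_{1-\alpha}(t_1-s_1)\,\omega_{1-\alpha}(t_2-s_2)$ integrated over the product of the two adjacent subintervals (your identity $(t_1-s_1)(t_2-s_2)-(t_1-s_2)(t_2-s_1)=(t_2-t_1)(s_2-s_1)$ is right, and the $-\alpha$ power reverses the inequality as needed), while (V) reduces to the fact that the increment $f(x+h)-f(x)$ of the increasing convex function $f(s)=\omega_{1-\alpha}(t_{n-\frac{1}{2}}-s)$ is non-decreasing in both $x$ and $h$, combined with $\tau_{k-\frac{1}{2}}\le\tau_{k+\frac{1}{2}}$ from (\ref{step-condition}). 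Both routes invoke the mesh condition in exactly the same places ((III) and (V)); yours avoids the intermediate points $\zeta_k,\gamma_k,\rho_k$ and makes explicit which kernel properties (monotonicity, convexity, total positivity) drive each estimate, so it transfers to other completely monotone kernels, whereas the paper's computation produces the explicit ratio $\psi_{n-k}^{(n)}=\big(\tfrac{t_{n-\frac{3}{2}}-x}{t_{n-\frac{1}{2}}-x}\big)^{\alpha}$ from which the first inequality of (II) falls out as a by-product. One shared caveat: like the paper's own argument, your proofs of (III) and (IV) only deliver non-strict inequalities (indeed, on a uniform mesh, which satisfies (\ref{step-condition}), the interior coefficients obey $a_k^{(n)}=a_k^{(n-1)}$ exactly), but nothing downstream of the lemma requires strictness.
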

\begin{proof}(I) Making use of the integral mean value theorem, there exists constant $\xi_k$ such that
\begin{align*}
a_k^{(n)}=\frac{1}{\tau_{n-k-\frac{1}{2}}}\int_{t_{n-k-\frac{3}{2}}}^{t_{n-k-\frac{1}{2}}}& \omega_{1-\alpha}(t_{n-\frac{1}{2}}-s)ds
 =\omega_{1-\alpha}(t_{n-\frac{1}{2}}-\xi_k),
\end{align*}
where $\xi_k\in(t_{n-k-\frac{3}{2}}, t_{n-k-\frac{1}{2}}).$
Noticing the monotonic decreasing of $\omega_{1-\alpha}(s)$ and
 $\omega_{1-\alpha}(s)>0,$ one can get the inequality in (I).

(II) We introduce the following two auxiliary sequences
$$\psi_{n-k}^{(n)}=\frac{a_{n-k}^{(n)}}{a_{n-1-k}^{(n-1)}},~~
b_{n,k}(\theta)=\frac{1}{\tau_{k-\frac{1}{2}}}\int_{t_{k-\frac{3}{2}}}^{t_{k-\frac{3}{2}}
+\theta\tau_{k-\frac{1}{2}}}\omega_{1-\alpha}(t_{n-\frac{1}{2}}-s) ds~~{\rm for}~~1\le k\le n-1.$$
Applying the Cauchy mean-value theorem, there exists $\zeta_k\in (0,1)$ such that
\begin{align*}
  \psi_{n-k}^{(n)}&=\frac{b_{n,k}(1)-b_{n,k}(0)}{b_{n-1,k}(1)-b_{n-1,k}(0)}=\frac{b'_{n,k}(\zeta_k)}{b'_{n-1,k}(\zeta_k)}
  =\frac{\omega_{1-\alpha}(t_{n-\frac{1}{2}}-t_{k-\frac{3}{2}}-\zeta_k\tau_{k-\frac{1}{2}})}
  {\omega_{1-\alpha}(t_{n-\frac{3}{2}}-t_{k-\frac{3}{2}}-\zeta_k\tau_{k-\frac{1}{2}})}\\
  &=\left(\frac{t_{n-\frac{3}{2}}-t_{k-\frac{3}{2}}-\zeta_k\tau_{k-\frac{1}{2}}}
  {t_{n-\frac{1}{2}}-t_{k-\frac{3}{2}}-\zeta_k\tau_{k-\frac{1}{2}}}\right)^\alpha,~~1\le k\le n-1.
\end{align*}
It follows that
\begin{align*}
0\le \psi_{n-k}^{(n)}\le 1,~~1\le k\le n-1,
\end{align*}
which implies the first inequality is valid in (II).
Noticing that $y=\frac{t_{n-\frac{3}{2}}-x}{t_{n-\frac{1}{2}}-x}$ is decreasing with respect to $x>0,$ it yields
$$\psi_{k-1}^{(n)}<\psi_k^{(n)},~~1\le k\le n-1,~~n\ge 2.$$ Then we get the second inequality in (II).

(III) By the definition of $a_{n-k}^{(n)}$ and the variable substitution $t=\frac{s-t_{k-\frac{3}{2}}}{\tau_{k-\frac{1}{2}}}$, it yields
\begin{align*}
a_{n-k}^{(n)}=\int_0^1\omega_{1-\alpha}\Big(t_{n-\frac{1}{2}}-t_{k-\frac{3}{2}}-\tau_{k-\frac{1}{2}}t\Big)dt.
\end{align*}
With the help of the differential mean-value theorem, there exists $\eta_j\in (0,1)$ such that
\begin{align*}
&a_{n-k}^{(n)}-a_{n-k}^{(n-1)}\\
=&\frac{1}{\Gamma(1-\alpha)}
\int_0^1\Big[\Big(t_{n-\frac{1}{2}}-t_{k-\frac{3}{2}}-\tau_{k-\frac{1}{2}}t\Big)^{-\alpha}
-\Big(t_{n-\frac{3}{2}}-t_{k-\frac{5}{2}}-\tau_{k-\frac{3}{2}}t\Big)^{-\alpha}\Big]dt\\
=&\frac{\alpha}{\Gamma(1-\alpha)}
\int_0^1(t_{n-\frac{3}{2}}-\eta_j)^{-\alpha-1}\Big(\tau_{k-\frac{1}{2}}t+\tau_{k-\frac{3}{2}}(1-t)-\tau_{n-\frac{1}{2}}\Big)dt.
\end{align*}
The condition $\tau_{k-1}\le\tau_{k},~~(2\le k\le n)$ implies that
\begin{align*}
\tau_{k-\frac{1}{2}}t+\tau_{k-\frac{3}{2}}(1-t)-\tau_{n-\frac{1}{2}}\le 0.
\end{align*}
Consequently, we obtain the desired inequality (III).

(IV) It follows from (\ref{coefficient}) that
\begin{align*}
a_0^{(k)}=\frac{1}{\tau_{k-\frac{1}{2}}}\int_{t_{k-\frac{3}{2}}}^{t_{k-\frac{1}{2}}}\omega_{1-\alpha}(t_{k-\frac{1}{2}}-s) ds
=\frac{1}{\tau_{k-\frac{1}{2}}}\omega_{2-\alpha}(\tau_{k-\frac{1}{2}})
\end{align*}
and
\begin{align*}
a_{k-2}^{(k)}=\frac{1}{\tau_{\frac{3}{2}}}\int_{t_{\frac{1}{2}}}^{t_{\frac{3}{2}}}\omega_{1-\alpha}(t_{k-\frac{1}{2}}-s) ds
\ge\omega_{1-\alpha}(t_{k-\frac{1}{2}}-t_\frac{1}{2}).
\end{align*}
Then, it yields
\begin{align*}
\frac{a_{0}^{(k)}}{a_{k-2}^{(k)}}\le \frac{\omega_{2-\alpha}
(\tau_{k-\frac{1}{2}})}{\tau_{k-\frac{1}{2}}\omega_{1-\alpha}(t_{k-\frac{1}{2}}-t_{\frac{1}{2}})}\le
\frac{(t_{k-\frac{1}{2}}-t_{\frac{1}{2}})^\alpha}{(1-\alpha)(\tau_{k-\frac{1}{2}})^\alpha}.
\end{align*}

(V) Exchanging the order of integration, it arrives at
\begin{align*}
a_{n-k}^{(n)}-\omega_{1-\alpha}(t_{n-\frac{1}{2}}-t_{k-\frac{3}{2}})
=&\frac{1}{\tau_{k-\frac{1}{2}}}\int_{t_{k-\frac{3}{2}}}^{t_{k-\frac{1}{2}}}
\big[\omega_{1-\alpha}(t_{n-\frac{1}{2}}-s)-\omega_{1-\alpha}(t_{n-\frac{1}{2}}-t_{k-\frac{3}{2}})\big] ds\\
=&-\frac{1}{\tau_{k-\frac{1}{2}}}\int_{t_{k-\frac{3}{2}}}^{t_{k-\frac{1}{2}}}
\Big(\int_{t_{k-\frac{3}{2}}}^s\omega_{-\alpha}(t_{n-\frac{1}{2}}-\mu)d\mu\Big)ds\\
=&\int_{t_{k-\frac{3}{2}}}^{t_{k-\frac{1}{2}}}\frac{\mu-t_{k-\frac{1}{2}}}{\tau_{k-\frac{1}{2}}} \omega_{-\alpha}(t_{n-\frac{1}{2}}-\mu)d\mu.
\end{align*}
Define the following auxiliary function
$$c_k(\theta)=\int_{t_{k-\frac{3}{2}}}^{t_{k-\frac{3}{2}}+\theta\tau_{k-\frac{1}{2}}}
\frac{\mu-t_{k-\frac{3}{2}}-\theta\tau_{k-\frac{1}{2}}}{\tau_{k-\frac{1}{2}}}\omega_{-\alpha}(t_{n-\frac{1}{2}}-\mu)d\mu,~~1\le k\le n,$$
it is easy to check $c_k(0)=c_k'(0),~1\le k\le n.$ By virtue of the Cauchy mean-value theorem,
there exists $\gamma_k,~\rho_k\in (0,1)$ such that
\begin{align*}
&\frac{a_{n-k}^{(n)}-\omega_{1-\alpha}(t_{n-\frac{1}{2}}-t_{k-\frac{3}{2}})}
{a_{n-k-1}^{(n)}-\omega_{1-\alpha}(t_{n-\frac{1}{2}}-t_{k-\frac{1}{2}})}\\
=&\frac{c_k(1)-c_k(0)}{c_{k+1}(1)-c_{k+1}(0)}=\frac{c'_k(\gamma_k)}{c'_{k+1}(\gamma_k)}
=\frac{c'_k(\gamma_k)-c'_k(0)}{c'_{k+1}(\gamma_k)-c'_{k+1}(0)}
=\frac{c''_k(\rho_k)}{c''_{k+1}(\rho_k)}\\
=&\frac{\tau_{k-\frac{1}{2}}\omega_{-\alpha}(t_{n-\frac{1}{2}}-t_{k-\frac{3}{2}}-\rho_k\tau_{k-\frac{1}{2}})}
{\tau_{k+\frac{1}{2}}\omega_{-\alpha}(t_{n-\frac{1}{2}}-t_{k-\frac{1}{2}}-\rho_k\tau_{k+\frac{1}{2}})}
\le\Big(\frac{t_{n-\frac{1}{2}}-t_{k-\frac{1}{2}}-\rho_k\tau_{k+\frac{1}{2}}}
{t_{n-\frac{1}{2}}-t_{k-\frac{3}{2}}-\rho_k\tau_{k-\frac{1}{2}}}\Big)^{1+\alpha}\\
\le&\Big(\frac{t_{n-\frac{1}{2}}-t_{k-\frac{1}{2}}}
{t_{n-\frac{1}{2}}-t_{k-\frac{1}{2}}+(1-\rho_k)\tau_{k-\frac{1}{2}}}\Big)^{1+\alpha}\le1.
\end{align*}
The proof ends.
\end{proof}

\subsection{The properties of the DCC kernels}
In this section, we present the DCC kernels generated by the discrete convolution kernels $a_{n-k}^{(n)}$ (proposed in \cite{Liao-Li,Liao-Tang}).
The DCC kernels are the key to prove the convergence of the difference scheme (\ref{discrete-scheme})-(\ref{discrete-u-v-initialvalue}). The discrete tool DCC kernels $p_{n-k}^{(n)}$ are defined by
\begin{align}
p_0^{(n)}=\frac{1}{a_0^{(n)}},~~p_{n-k}^{(n)}=\frac{1}{a_0^{(k)}}\sum_{j=k+1}^{n}(a_{j-k-1}^{(j)}-a_{j-k}^{(j)})p_{n-j}^{(n)},~~1\le k\le n-1.\label{DCC kernel}
\end{align}
It is equivalent to the following identity
\begin{align}
\sum_{j=k}^{n}p_{n-j}^{(n)}a_{j-k}^{(j)}\equiv1,~~1\le k\le n.\label{DCC equlity}
\end{align}

The following lemma presents the linear interpolation error
formula with an integral remainder.
\begin{lemma}\label{interpolation-error}
Assume $q\in C^2(0,T])$ and let $\Pi_{1,k} q(t)$ be linear interpolation of $q(t)$ over the small interval $[t_{k-\frac{3}{2}}, t_{k-\frac{1}{2}}] $ for $1\le k\le n,$ then, the linear interpolation error gives
\begin{align*}
q(t)-\Pi_{1,k}q(t)=
\int_{t_{k-\frac{3}{2}}}^{t_{k-\frac{1}{2}}}\chi_k(t,\lambda)q''(\lambda) d\lambda,~~t\in[t_{k-\frac{3}{2}}, t_{k-\frac{1}{2}}],~~1\le k\le n,
\end{align*}
where the Peano kernel $\chi_k(t,\lambda)=\max\{t-\lambda, 0\}-\frac{t-t_{k-\frac{3}{2}}}{\tau_{k-\frac{1}{2}}}(t_{k-\frac{1}{2}}-\lambda)$ such that
$$-\frac{(t_{k-\frac{1}{2}}-\lambda)}{\tau_{k-\frac{1}{2}}}(t-t_{k-\frac{3}{2}})\le \chi_k(t,\lambda)\le 0, ~~t, \lambda\in[t_{k-\frac{3}{2}}, t_{k-\frac{1}{2}}].$$
\end{lemma}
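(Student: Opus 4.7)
The plan is to derive the integral representation by a Taylor expansion with integral remainder and then verify the two-sided bound on the Peano kernel by elementary case analysis. This is the classical Peano kernel theorem specialized to linear interpolation on $[t_{k-3/2},t_{k-1/2}]$; the only bookkeeping is tracking the endpoints carefully so that the indicator $\max\{t-\lambda,0\}$ emerges in the correct form.

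First I would write the linear interpolant explicitly as
$$\Pi_{1,k}q(t)=q(t_{k-3/2})+\frac{t-t_{k-3/2}}{\tau_{k-1/2}}\bigl(q(t_{k-1/2})-q(t_{k-3/2})\bigr),$$
and expand both $q(t)$ and $q(t_{k-1/2})$ around $t_{k-3/2}$ using the integral form of Taylor's remainder. Substituting these into the difference $q(t)-\Pi_{1,k}q(t)$, the constant and linear terms cancel identically, leaving
$$q(t)-\Pi_{1,k}q(t)=\int_{t_{k-3/2}}^{t}(t-\lambda)q''(\lambda)d\lambda-\frac{t-t_{k-3/2}}{\tau_{k-1/2}}\int_{t_{k-3/2}}^{t_{k-1/2}}(t_{k-1/2}-\lambda)q''(\lambda)d\lambda.$$
Rewriting the first integral as $\int_{t_{k-3/2}}^{t_{k-1/2}}\max\{t-\lambda,0\}q''(\lambda)d\lambda$ puts both terms over the same interval and identifies the integrand as $\chi_k(t,\lambda)q''(\lambda)$ exactly as stated.

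The lower bound $-\frac{t_{k-1/2}-\lambda}{\tau_{k-1/2}}(t-t_{k-3/2})\le \chi_k(t,\lambda)$ is immediate from $\max\{t-\lambda,0\}\ge 0$. For the upper bound $\chi_k(t,\lambda)\le 0$ I would split into two cases. When $\lambda\ge t$ the $\max$ term vanishes and the remaining expression is a product of nonnegative factors with a minus sign, hence nonpositive. When $\lambda<t$, setting $s=(t-t_{k-3/2})/\tau_{k-1/2}\in[0,1]$ and $u=\lambda-t_{k-3/2}\in[0,s\tau_{k-1/2})$ gives the clean identity $\chi_k(t,\lambda)=(s\tau_{k-1/2}-u)-s(\tau_{k-1/2}-u)=-(1-s)u\le 0$.

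I do not anticipate genuine obstacles here; the only mild subtlety is the hypothesis $q\in C^2(0,T]$ rather than $C^2[0,T]$, which matters only in the $k=1$ case where $t_{-1/2}=0$. Since $q''$ appears under an integral, the identity and bound extend to the closed interval by the assumed continuity on $(0,T]$ together with integrability of $q''$ near $0$, and everything used later in the paper will integrate $\chi_k$ against $q''$ so that only an $L^1$ control of $q''$ is ultimately required.
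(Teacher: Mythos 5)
Your derivation is correct, and it is considerably more than the paper offers: the paper's entire proof of this lemma is the sentence ``By similar process of Lemma 3.1 in [Liao--Li], it is easy to obtain the result,'' so your Taylor-with-integral-remainder expansion about $t_{k-\frac{3}{2}}$, the cancellation of the affine part, the rewriting of $\int_{t_{k-3/2}}^{t}(t-\lambda)q''\,d\lambda$ as $\int_{t_{k-3/2}}^{t_{k-1/2}}\max\{t-\lambda,0\}q''\,d\lambda$, and the two-case sign analysis culminating in $\chi_k(t,\lambda)=-(1-s)u\le 0$ supply exactly the content that the citation delegates elsewhere. The one point I would sharpen is your closing remark about the $k=1$ case. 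There $t_{-\frac{1}{2}}=t_0=0$, and in the paper's actual use of this lemma (the proof of Lemma 2.3, where $q=\omega_{1+\alpha}$ with $\omega_{1+\alpha}''(\lambda)\propto\lambda^{\alpha-2}$, and Lemma 2.6, where only $\int_0^T t\,|v''(t)|\,dt<\infty$ is assumed) $q''$ is \emph{not} integrable near $0$ and $q'(0)$ need not exist, so ``integrability of $q''$ near $0$'' is both unavailable and not the right condition. What rescues the representation is that your own identity $\chi_1(t,\lambda)=-(1-s)\lambda$ for $\lambda<t$ (and $\chi_1(t,0)=0$ in the other case as well) shows $\chi_1(t,\lambda)=O(\lambda)$ as $\lambda\to 0^+$, so $\chi_1(t,\cdot)q''$ is integrable precisely under the weighted condition $\int_0 \lambda\,|q''(\lambda)|\,d\lambda<\infty$ that the paper assumes; the identity then follows by applying your argument on $[\varepsilon,t_{\frac{1}{2}}]$ with the interpolation nodes fixed and letting $\varepsilon\to 0^+$, the boundary terms vanishing because $\chi_1$ and $\partial_\lambda$ of its defining primitive vanish at $\lambda=0$. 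With that adjustment the proof is complete and self-contained.
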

\begin{proof}
By similar process of Lemma 3.1 in \cite{Liao-Li}, it is easy to obtain the result.
\end{proof}

The properties of the DCC kernels $p_{n-k}^{(n)}$ are demonstrated in the following two lemmas.
\begin{lemma}\label{DCC-property}
The DCC kernels $p_{n-k}^{(n)}$ are non-negative, i.e.,
$$p_{n-k}^{(n)}\ge 0,~~1\le k\le n.$$
Moreover, the DCC kernels $p_{n-k}^{(n)}$ satisfies the following property
$$\sum_{j=1}^{n}p_{n-j}^{(n)}\le \omega_{1+\alpha}(t_{n-\frac{1}{2}}).$$
\end{lemma}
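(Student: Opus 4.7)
The plan is to handle the two assertions of the lemma separately. Non-negativity will follow from a short downward induction on $k$ driven by the recursive definition (\ref{DCC kernel}), while the summation bound will be obtained by pairing the identity (\ref{DCC equlity}) with a carefully chosen increment sequence and then invoking Chebyshev's integral inequality together with the classical convolution identity $\omega_{1-\alpha}*\omega_\alpha \equiv \omega_1 \equiv 1$.

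For non-negativity, I would first note the base case $p_0^{(n)} = 1/a_0^{(n)} > 0$, which holds because Lemma \ref{property-a}(I) gives $a_0^{(n)} > 0$. Assuming inductively that $p_{n-j}^{(n)} \ge 0$ for all $j > k$, the recursion (\ref{DCC kernel}) writes $p_{n-k}^{(n)}$ as a linear combination of $\{p_{n-j}^{(n)}\}_{j>k}$ with coefficients $(a_{j-k-1}^{(j)} - a_{j-k}^{(j)})/a_0^{(k)}$; the monotonicity $a_\ell^{(j)} \le a_{\ell-1}^{(j)}$ from Lemma \ref{property-a}(I) makes these coefficients non-negative, completing the induction.

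For the summation bound, I would set $d_k := \int_{t_{k-\frac{3}{2}}}^{t_{k-\frac{1}{2}}} \omega_\alpha(s)\,ds = \omega_{1+\alpha}(t_{k-\frac{1}{2}}) - \omega_{1+\alpha}(t_{k-\frac{3}{2}})$ for $1 \le k \le n$, with the convention $t_{-\frac{1}{2}} = t_0 = 0$, so that telescoping yields $\sum_{k=1}^n d_k = \omega_{1+\alpha}(t_{n-\frac{1}{2}})$. Exchanging the order of summation and applying the identity (\ref{DCC equlity}) gives
\[\sum_{j=1}^n p_{n-j}^{(n)} G^j \;=\; \sum_{k=1}^n d_k \sum_{j=k}^n p_{n-j}^{(n)} a_{j-k}^{(j)} \;=\; \sum_{k=1}^n d_k \;=\; \omega_{1+\alpha}(t_{n-\frac{1}{2}}),\]
where $G^j := \sum_{k=1}^j a_{j-k}^{(j)} d_k$. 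Since $p_{n-j}^{(n)}\ge 0$ has already been established, the desired bound will follow from the pointwise lower estimate $G^j \ge 1$ for every $1\le j\le n$.

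This pointwise bound on $G^j$ is the main technical obstacle. Unfolding the integral expression (\ref{coefficient}) for $a_{j-k}^{(j)}$, each summand of $G^j$ becomes the product of two integrals on $[t_{k-\frac{3}{2}}, t_{k-\frac{1}{2}}]$, one of the increasing function $s \mapsto \omega_{1-\alpha}(t_{j-\frac{1}{2}} - s)$ and one of the decreasing function $s \mapsto \omega_\alpha(s)$. Chebyshev's integral inequality for oppositely monotone functions then yields on each subinterval
\[\frac{1}{\tau_{k-\frac{1}{2}}} \int_{t_{k-\frac{3}{2}}}^{t_{k-\frac{1}{2}}} \omega_{1-\alpha}(t_{j-\frac{1}{2}} - s)\,ds \cdot \int_{t_{k-\frac{3}{2}}}^{t_{k-\frac{1}{2}}} \omega_\alpha(s)\,ds \;\ge\; \int_{t_{k-\frac{3}{2}}}^{t_{k-\frac{1}{2}}} \omega_{1-\alpha}(t_{j-\frac{1}{2}} - s)\,\omega_\alpha(s)\,ds,\]
and summing over $k$ together with the Beta-type identity $\int_0^{t_{j-\frac{1}{2}}} \omega_{1-\alpha}(t_{j-\frac{1}{2}} - s)\omega_\alpha(s)\,ds = \omega_1(t_{j-\frac{1}{2}}) = 1$ delivers $G^j \ge 1$. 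A small technical point is to justify Chebyshev's inequality in the presence of the integrable singularities of $\omega_\alpha$ at $s=0$ and of $\omega_{1-\alpha}$ at $s=t_{j-\frac{1}{2}}$ (active when $k=1$ or $k=j$), but since all the relevant integrals converge absolutely, the sign-definite derivation based on $\iint [f(x)-f(y)][g(x)-g(y)]\,dx\,dy \le 0$ still applies.
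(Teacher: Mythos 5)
Your proof is correct, and while the first half coincides with the paper's argument, the second half takes a genuinely different route to the same key inequality. For non-negativity you use exactly the paper's (one-line) reasoning: the recursion (\ref{DCC kernel}) has non-negative coefficients by the monotonicity $a_{k}^{(j)}\le a_{k-1}^{(j)}$ of Lemma \ref{property-a}(I). For the bound on $\sum_j p_{n-j}^{(n)}$, both you and the paper reduce the claim to showing that the discrete operator applied to $\omega_{1+\alpha}$ dominates its exact Caputo derivative, i.e.\ $G^j=\sum_{k=1}^j a_{j-k}^{(j)}\nabla_\tau\omega_{1+\alpha}(t_{k-\frac12})\ge 1$, after which multiplying by $p_{n-j}^{(n)}$, summing, exchanging the order of summation and invoking (\ref{DCC equlity}) with the telescoping of the increments finishes the argument identically. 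The difference lies in how the one-sided local error is established: the paper writes $1-G^j=\sum_k S_k^j$, integrates by parts, and uses the Peano-kernel representation of the linear interpolation error (Lemma \ref{interpolation-error}) together with the three sign facts $\omega_{-\alpha}<0$, $\chi_k\le 0$, $\omega_{1+\alpha}''\le 0$; you instead compare $a_{j-k}^{(j)}d_k$ directly with $\int_{t_{k-3/2}}^{t_{k-1/2}}\omega_{1-\alpha}(t_{j-\frac12}-s)\,\omega_\alpha(s)\,ds$ via Chebyshev's integral inequality for the oppositely monotone pair $s\mapsto\omega_{1-\alpha}(t_{j-\frac12}-s)$ (increasing) and $s\mapsto\omega_\alpha(s)$ (decreasing), and then sum using the Beta identity $\omega_{1-\alpha}*\omega_\alpha\equiv 1$. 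The two local estimates bound the same quantity, but yours trades the interpolation-error machinery for a single correlation inequality, which is arguably more self-contained (it does not need Lemma \ref{interpolation-error} or concavity of $\omega_{1+\alpha}$), while the paper's version fits into the Peano-kernel framework it reuses later in Lemma \ref{truncationerror}. Your handling of the endpoint singularities is also adequate, since all integrals involved converge absolutely and the double-integral derivation of Chebyshev's inequality survives. Only a cosmetic caveat: your symbol $G^j$ collides with the quantity of the same name defined in the proof of Lemma \ref{truncationerror}, so it should be renamed if inserted into the paper.
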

\begin{proof}
Noticing the property $a_{j-1}^{(n)}\ge a_{j}^{(n)}$ in Lemma \ref{property-a}, we get $p_{n-k}^{(n)}\ge 0$.

We approximate the Caputo derivative of the function $\omega_{1+\alpha}(t)$ by the formula (\ref{appr-formula}) at $t=t_{j-\frac{1}{2}}$ and let $S^j$
be the truncation error. We have
\begin{align}
S^j=&{}_0^CD_t^\alpha \omega_{1+\alpha}(t_{j-\frac{1}{2}})-\sum_{k=1}^{j}a_{j-k}^{(j)}\nabla_\tau \omega_{1+\alpha}(t_{k-\frac{1}{2}})\nonumber\\
=&\sum_{k=1}^j\int_{t_{k-\frac{3}{2}}}^{t_{k-\frac{1}{2}}}\omega_{1-\alpha}(t_{j-\frac{1}{2}}-s)\Big(\omega_{1+\alpha}(s)-\Pi_{1,k}\omega_{1+\alpha}(s)\Big)' ds\nonumber\\=:&\sum_{k=1}^jS_k^j,\label{w_{1+alpha}appr}
\end{align}
then, by applying the integration by parts and Lemma \ref{interpolation-error}, one arrives at
\begin{align}
S_k^j&=\int_{t_{k-\frac{3}{2}}}^{t_{k-\frac{1}{2}}}\omega_{-\alpha}(t_{j-\frac{1}{2}}-s)\Big(\omega_{1+\alpha}(s)-\Pi_{1,k}\omega_{1+\alpha}(s)\Big) ds\nonumber\\
&=\int_{t_{k-\frac{3}{2}}}^{t_{k-\frac{1}{2}}}\omega_{-\alpha}(t_{j-\frac{1}{2}}-s) ds
\int_{t_{k-\frac{3}{2}}}^{t_{k-\frac{1}{2}}}\chi_k(s,\lambda)\omega_{1+\alpha}''(\lambda) d\lambda\le 0,\label{trunca-w}
\end{align}
where we used the fact that $\omega_{-\alpha}(t_{j-\frac{1}{2}}-s)<0,~~\chi_k(s,\lambda)\le0$ and $\omega_{1+\alpha}''(\lambda)\le0$
in the last step. Noticing that ${}_0^CD_t^\alpha \omega_{1+\alpha}(t)=1,$
it follows from (\ref{w_{1+alpha}appr}) and (\ref{trunca-w}) that
\begin{align}
1-\sum_{k=1}^{j}a_{j-k}^{(j)}\nabla_\tau \omega_{1+\alpha}(t_{k-\frac{1}{2}})\le0.\label{**}
\end{align}
Multiplying the inequality (\ref{**}) by $p_{n-j}^{(n)}$ and summing up $j$ from 1 to $n,$ it yields
$$\sum_{j=1}^np_{n-j}^{(n)}\le \sum_{j=1}^np_{n-j}^{(n)}\sum_{k=1}^{j}a_{j-k}^{(j)}\nabla_\tau \omega_{1+\alpha}(t_{k-\frac{1}{2}})=\omega_{1+\alpha}(t_{n-\frac{1}{2}}).$$
This completes the proof.
\end{proof}
\begin{lemma}{\rm\cite{Liao-Tang}}\label{positive-property}
Assume that the two kernels $\xi_{n-k}^{(n)}$ and $\eta_{n-k}^{(n)}$ satisfying the following orthogonal identity
  \begin{align*}
\sum_{k=j}^{n}\xi_{n-k}^{(n)}\eta_{k-j}^{(k)}=\delta_{nj},~~1\le j\le n.
\end{align*}
Then $\xi_{n-k}^{(n)}$ are positive definite if and only if $\eta_{n-k}^{(n)}$ are positive definite.
\end{lemma}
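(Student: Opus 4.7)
My plan is to translate the orthogonal identity into the statement that two lower-triangular matrices are mutually inverse, and then use a single congruence step to transfer positive semidefiniteness from one to the other.

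First I would assemble the kernels into $N\times N$ lower-triangular matrices $\Xi$ and $H$, setting $\Xi_{n,k}=\xi_{n-k}^{(n)}$ for $1\le k\le n\le N$ and $H_{k,j}=\eta_{k-j}^{(k)}$ for $1\le j\le k\le N$ (both matrices have zeros strictly above the diagonal). The hypothesis $\sum_{k=j}^{n}\xi_{n-k}^{(n)}\eta_{k-j}^{(k)}=\delta_{nj}$ is literally $(\Xi H)_{n,j}=\delta_{nj}$, i.e.\ $\Xi H=I$. Hence $\Xi$ and $H$ are mutually inverse, and in particular both are invertible.

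Next I would note that the positive definiteness of the kernel $\xi_{n-k}^{(n)}$ on $\{1,\ldots,N\}$ means, by the standard convention used in the DCC literature, that
$$\sum_{n=1}^{N}w_n\sum_{k=1}^{n}\xi_{n-k}^{(n)}w_k \;=\; w^{T}\Xi\,w\;\ge 0$$
for every real vector $w=(w_1,\ldots,w_N)^{T}$, which is equivalent to the symmetric part $\Xi+\Xi^{T}$ being positive semidefinite. The analogous rewriting applies to $\eta_{n-k}^{(n)}$ with $H$ in place of $\Xi$.

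The crux is the identity
$$H + H^{T} \;=\; \Xi^{-1}+\Xi^{-T} \;=\; \Xi^{-1}\bigl(\Xi + \Xi^{T}\bigr)\Xi^{-T},$$
verified by a one-line direct computation using $\Xi^{-1}\Xi = \Xi^{T}\Xi^{-T}=I$. This exhibits $H+H^{T}$ as a congruence of $\Xi+\Xi^{T}$ through the invertible matrix $\Xi^{-T}$; Sylvester's law of inertia then yields that the two symmetric matrices have identical inertia, so in particular $H+H^{T}\succeq 0$ iff $\Xi+\Xi^{T}\succeq 0$. Since $N$ was arbitrary, this delivers the ``iff'' claim of the lemma.

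I do not anticipate a real obstacle here: the only mildly subtle point is confirming that the ``positive definite'' of the statement is indeed the quadratic-form notion identified above (which is the version invoked later in the $L^{2}$ stability analysis via the DCC kernels); once that is pinned down, the proof reduces to the single congruence identity, which is entirely elementary.
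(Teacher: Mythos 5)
Your proposal is correct. Note that the paper itself gives no proof of this lemma---it is quoted from the reference [Liao--Tang]---so there is nothing internal to compare against; your reduction to $\Xi H=I$ for lower-triangular matrices, the identity $H+H^{T}=\Xi^{-1}(\Xi+\Xi^{T})\Xi^{-T}$, and Sylvester's law of inertia constitute the standard argument for this equivalence, and your reading of ``positive definite'' as the quadratic-form condition $\sum_{n}w_{n}\sum_{k\le n}\xi_{n-k}^{(n)}w_{k}\ge 0$ matches the convention the paper uses in its proof of Lemma \ref{DCCposi}.
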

With the help of Lemma \ref{positive-property}, we obtain the following lemma which plays an important role in proving the convergence
of the numerical scheme for the time fractional diffusion-wave equation.
\begin{lemma}\label{DCCposi}
The DCC kernels $p_{n-k}^{(n)}$ defined by (\ref{DCC kernel}) are positive definite.
\end{lemma}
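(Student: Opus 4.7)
The plan is to apply Lemma \ref{positive-property} to reduce positive definiteness of $\{p_{n-k}^{(n)}\}$ to that of a suitable ``inverse'' kernel built from $\{a_{n-k}^{(n)}\}$, and then verify positive definiteness of that inverse using the properties of $a$ recorded in Lemma \ref{property-a}.

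First, I would extract the inverse kernel from the identity (\ref{DCC equlity}). After a harmless index swap, (\ref{DCC equlity}) reads $\sum_{j=k}^{n} p_{n-j}^{(n)} a_{j-k}^{(j)} \equiv 1$ for $1 \le k \le n$. Subtracting the identity at level $k+1$ from that at level $k$ gives, for $1 \le k < n$,
\[ p_{n-k}^{(n)} a_0^{(k)} + \sum_{j=k+1}^{n} p_{n-j}^{(n)}\bigl(a_{j-k}^{(j)} - a_{j-k-1}^{(j)}\bigr) = 0, \]
while the endpoint $k = n$ reduces to $p_0^{(n)} a_0^{(n)} = 1$. Introducing the auxiliary kernel $\tilde{a}_0^{(j)} := a_0^{(j)}$ and $\tilde{a}_{j-k}^{(j)} := a_{j-k}^{(j)} - a_{j-k-1}^{(j)}$ for $1 \le k < j$ (these entries are $\le 0$ by Lemma \ref{property-a}(I)), the two cases merge into the single Kronecker identity
\[ \sum_{k=j}^{n} p_{n-k}^{(n)}\,\tilde{a}_{k-j}^{(k)} = \delta_{nj}, \quad 1 \le j \le n, \]
which is precisely the orthogonal identity required by Lemma \ref{positive-property} with $\xi = p$ and $\eta = \tilde{a}$. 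Hence it suffices to prove $\{\tilde{a}_{n-k}^{(n)}\}$ is positive definite.

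Verifying positive definiteness of $\tilde{a}$ is the main obstacle. An Abel summation (with the convention $v_0 := 0$) rewrites the bilinear form of $\tilde{a}$ as $\sum_{n} v_n \sum_{k=1}^{n} a_{n-k}^{(n)}(v_k - v_{k-1})$, a discrete analog of the classical inequality $\int_0^T v(t)\,{}_0^CD_t^\alpha v(t)\,dt \ge 0$ that holds when $v(0) = 0$. My preferred route is to substitute the completely monotone representation
\[ \omega_{1-\alpha}(t) = \frac{\sin(\pi\alpha)}{\pi}\int_0^\infty \lambda^{\alpha-1} e^{-\lambda t}\,d\lambda \]
into the integral formula (\ref{coefficient}) for $a_{n-k}^{(n)}$, expressing the bilinear form as a nonnegatively weighted integral over $\lambda > 0$ of quadratic expressions built from exponentials $e^{-\lambda t_{n-\frac{1}{2}}}$ and cell-averaged exponentials over $[t_{k-\frac{3}{2}},t_{k-\frac{1}{2}}]$. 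A summation by parts, together with the mesh monotonicity (\ref{step-condition}) and the convexity bounds in Lemma \ref{property-a}(III) and (V), should then reduce the integrand pointwise to a nonnegative quantity. An alternative is a direct induction on $n$ exploiting the monotonicity and convexity estimates in Lemma \ref{property-a}(I)--(V), but that appears substantially more intricate. Once positive definiteness of $\tilde{a}$ is verified, Lemma \ref{positive-property} delivers the desired conclusion for $\{p_{n-k}^{(n)}\}$.
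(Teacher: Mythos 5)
Your reduction step is exactly the paper's: you extract the inverse kernel $\tilde a$ (the paper calls it $\zeta_{k-j}^{(k)}$, with $\zeta_0^{(k)}=a_0^{(k)}$ and $\zeta_{k-j}^{(k)}=a_{k-j}^{(k)}-a_{k-j-1}^{(k)}$ for $j<k$), recognize the Kronecker identity $\sum_{k=j}^{n}p_{n-k}^{(n)}\zeta_{k-j}^{(k)}=\delta_{nj}$, and invoke Lemma \ref{positive-property} to transfer positive definiteness from $\zeta$ to $p$. That part is correct and complete.

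The genuine gap is that the heart of the lemma --- positive definiteness of $\tilde a=\zeta$ --- is never actually established. Your proposed route via the completely monotone representation $\omega_{1-\alpha}(t)=\frac{\sin(\pi\alpha)}{\pi}\int_0^\infty\lambda^{\alpha-1}e^{-\lambda t}\,d\lambda$ only reduces the problem, for each fixed $\lambda$, to the positive definiteness of a kernel built from cell-averaged decaying exponentials on a nonuniform mesh; the claim that summation by parts then ``reduces the integrand pointwise to a nonnegative quantity'' is precisely the difficulty, and you give no argument for it. Meanwhile, the direct route you set aside as ``substantially more intricate'' is in fact how the paper finishes, and it is short and elementary: for any real $\{w_k\}$, split off the diagonal term $a_0^{(k)}w_k^2$, apply Young's inequality to the off-diagonal sum $\sum_k w_k\sum_{j<k}(a_{k-j}^{(k)}-a_{k-j-1}^{(k)})w_j$ (using the monotonicity $a_{k}^{(n)}\le a_{k-1}^{(n)}$ from Lemma \ref{property-a}(I) so the weights $a_{k-j-1}^{(k)}-a_{k-j}^{(k)}$ are nonnegative), and then bound the resulting column sums by the telescoping identity
\begin{align*}
\sum_{k=j+1}^{n}\bigl(a_{k-j-1}^{(k)}-a_{k-j}^{(k)}\bigr)
= a_0^{(j+1)}+\sum_{k=j+1}^{n-1}\bigl(a_{k-j}^{(k+1)}-a_{k-j}^{(k)}\bigr)-a_{n-j}^{(n)}\le a_0^{(j+1)},
\end{align*}
where the inequality uses Lemma \ref{property-a}(III). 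This yields $\sum_{k}w_k\sum_{j\le k}\zeta_{k-j}^{(k)}w_j\ge\frac12\sum_{k=1}^{n-1}(a_0^{(k)}-a_0^{(k+1)})w_k^2\ge0$. You should either carry out this elementary computation or supply the missing pointwise nonnegativity argument for your integral representation; as written, the proposal stops exactly where the real work begins.
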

\begin{proof} Denote
\begin{align*}
\zeta_{k-j}^{(k)}=\left\{
\begin{aligned}
&a_0^{(k)},~~j=k,\\
&a_{k-j}^{(k)}-a_{k-j-1}^{(k)},~~j\neq k,
\end{aligned}\right.
\end{align*}
the equality (\ref{DCC kernel}) is rewritten as
\begin{align*}
\sum_{k=j}^{n}p_{n-k}^{(n)}\zeta_{k-j}^{(k)}=\delta_{nj},~~1\le j\le n.
\end{align*}
By virtue of Lemma \ref{positive-property}, we only need to prove that $\zeta_{k-j}^{(k)}$ is positive definite.

Next, we prove the positive definiteness of the kernels $\zeta_{k-j}^{(k)}$.
For any real sequence $\{w_k\}_{k=1}^n,$ with the help of Young's inequality, it holds that
\begin{align*}
\sum_{k=1}^nw_k\sum_{j=1}^k\zeta_{k-j}^{(k)}w_j&=\sum_{k=1}^nw_k\Big(\zeta_0^{(k)}w_k+\sum_{j=1}^{k-1}\zeta_{k-j}^{(k)}w_j\Big)\\
&=\sum_{k=1}^na_0^{(k)}w_k^2+\sum_{k=1}^nw_k\sum_{j=1}^{k-1}(a_{k-j}^{(k)}-a_{k-j-1}^{(k)})w_j\\
&\ge\sum_{k=1}^na_0^{(k)}w_k^2-\frac{1}{2}\sum_{k=1}^nw_k^2\sum_{j=1}^{k-1}(a_{k-j-1}^{(k)}-a_{k-j}^{(k)})\\
&~-\frac{1}{2}\sum_{k=1}^n\sum_{j=1}^{k-1}(a_{k-j-1}^{(k)}-a_{k-j}^{(k)})w_j^2\\
&=\frac{1}{2}\sum_{k=1}^n(a_0^{(k)}+a_{k-1}^{(k)})w_k^2-\frac{1}{2}\sum_{j=1}^{n-1}w_j^2\sum_{k=j+1}^{n}(a_{k-j-1}^{(k)}-a_{k-j}^{(k)}).
\end{align*}
For the second term on the right hand of the above inequality, using the property (III) in Lemma \ref{property-a}, we have
\begin{align*}
\sum_{j=1}^{n-1}w_j^2\sum_{k=j+1}^{n}(a_{k-j-1}^{(k)}-a_{k-j}^{(k)})
=&\sum_{j=1}^{n-1}w_j^2\Big[a_0^{(j+1)}+\sum_{k=j+1}^{n-1}(a_{k-j}^{(k+1)}-a_{k-j}^{(k)})-a_{n-j}^{(n)}\Big]\\
\le&\sum_{j=1}^{n-1}a_0^{(j+1)}w_j^2.
\end{align*}
Then, it follows from the fact $a_0^{(k)}\le a_0^{(k-1)}$ in Lemma \ref{property-a} (III) that
 \begin{align*}
\sum_{k=1}^nw_k\sum_{j=1}^k\zeta_{k-j}^{(k)}w_j\ge &\frac{1}{2}\sum_{k=1}^n(a_0^{(k)}+a_{k-1}^{(k)})w_k^2
-\frac{1}{2}\sum_{j=1}^{n-1}a_0^{(j+1)}w_j^2\\
\ge&\frac{1}{2}\sum_{k=1}^{n-1}\Big(a_0^{(k)}-a_0^{(k+1)}\Big)w_k^2\ge0,
\end{align*}
which implies $\zeta_{k-j}^{(k)}$ is positive definite.  The proof ends.
\end{proof}

We denote the local consistency error of the formula (\ref{appr-formula}) at the time $t_{n-\frac{1}{2}}$ by $$R^{n}={}_0^CD_t^\alpha v(t_{n-\frac{1}{2}})-\mathcal{D}_{\tau}^\alpha v^{n-\frac{1}{2}}:=\sum_{k=1}^nR_{k}^n,~~n\ge 1,$$
where by exchanging the order of integration, one arrives at
\begin{align}
R_{k}^n=&-\frac{1}{\tau_{k-\frac{1}{2}}}\int_{t_{k-\frac{3}{2}}}^{t_{k-\frac{1}{2}}}\omega_{1-\alpha}(t_{n-\frac{1}{2}}-s)
\Big[\int_s^{t_{k-\frac{1}{2}}}v''(t)(t_{k-\frac{1}{2}}-t) dt\nonumber\\
&-\int_s^{t_{k-\frac{3}{2}}}v''(t)(t_{k-\frac{3}{2}}-t) dt\Big] ds\nonumber\\
=&-\frac{1}{\Gamma(2-\alpha)}\int_{t_{k-\frac{3}{2}}}^{t_{k-\frac{1}{2}}}
\Big[\frac{t_{k-\frac{1}{2}}-t}{\tau_{k-\frac{1}{2}}}(t_{n-\frac{1}{2}}-t_{k-\frac{3}{2}})^{1-\alpha}
-\frac{t_{k-\frac{3}{2}}-t}{\tau_{k-\frac{1}{2}}}(t_{n-\frac{1}{2}}-t_{k-\frac{1}{2}})^{1-\alpha}\nonumber\\
&~~-(t_{n-\frac{1}{2}}-t)^{1-\alpha}\Big]v''(t) dt\nonumber\\
=&\int_{t_{k-\frac{3}{2}}}^{t_{k-\frac{1}{2}}}
\tilde{\Pi}_{1,k}\omega_{2-\alpha}(t_{n-\frac{1}{2}}-t)v''(t) dt,~~1\le k\le n,\label{truncationerror-n1}
\end{align}
where $\widetilde{\Pi}_{1,k}\omega_{2-\alpha}(t_{n-\frac{1}{2}}-t)$
is the linear interpolation error of the function $\omega_{2-\alpha}(t_{n-\frac{1}{2}}-t)$ on the interval $[t_{k-\frac{3}{2}}, t_{k-\frac{1}{2}}],~~1\le k\le n.$

We are now in the position to estimate the global approximation errors $\sum\limits_{j=1}^np_{n-j}^{(n)}|R^j|.$
\begin{lemma}\label{truncationerror}
Assuming $v\in C^2((0,T])$ with $\int_0^T t|v''(t)| dt<\infty,$
if the nonuniform grid satisfies (\ref{step-condition}), it holds that
$$\sum\limits_{j=1}^np_{n-j}^{(n)}|R^j|\le 2\sum_{j=1}^{n} p_{n-j}^{(n)}a_0^{(j)}\int_{t_{j-\frac{3}{2}}}^{t_{j-\frac{1}{2}}}\Big(t-t_{j-\frac{3}{2}}\Big)|v''(t)| dt.$$
\end{lemma}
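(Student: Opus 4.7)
The plan is to bound each $|R^j|$ via the triangle inequality $|R^j|\le\sum_{k=1}^j|R_k^j|$ combined with the pointwise estimate $|R_k^j|\le\int_{t_{k-\frac{3}{2}}}^{t_{k-\frac{1}{2}}}\widetilde{\Pi}_{1,k}\omega_{2-\alpha}(t_{j-\frac{1}{2}}-t)|v''(t)|\,dt$; the non-negativity of the interpolation error $\widetilde{\Pi}_{1,k}\omega_{2-\alpha}(t_{j-\frac{1}{2}}-\cdot)$ follows from the concavity of $\omega_{2-\alpha}$. I would then treat the diagonal piece $k=j$ and the off-diagonal piece $k<j$ separately, in each case extracting a factor of $(t-t_{k-\frac{3}{2}})$ from the interpolation error so as to produce the integral $I_k:=\int_{t_{k-\frac{3}{2}}}^{t_{k-\frac{1}{2}}}(t-t_{k-\frac{3}{2}})|v''(t)|\,dt$ appearing on the right-hand side of the claim.

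For the diagonal, using $\omega_{2-\alpha}(0)=0$ I would write $\widetilde{\Pi}_{1,j}\omega_{2-\alpha}(t_{j-\frac{1}{2}}-t)=\omega_{2-\alpha}(t_{j-\frac{1}{2}}-t)-\frac{t_{j-\frac{1}{2}}-t}{\tau_{j-\frac{1}{2}}}\omega_{2-\alpha}(\tau_{j-\frac{1}{2}})$ and substitute $s=(t-t_{j-\frac{3}{2}})/\tau_{j-\frac{1}{2}}\in[0,1]$. The elementary bound $(1-s)^{1-\alpha}\le 1$ then yields $\widetilde{\Pi}_{1,j}\omega_{2-\alpha}(t_{j-\frac{1}{2}}-t)\le a_0^{(j)}(t-t_{j-\frac{3}{2}})$, hence $|R_j^j|\le a_0^{(j)}I_j$. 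For the off-diagonal piece I would plug the Peano kernel formula of Lemma~\ref{interpolation-error} in with $g(\cdot)=\omega_{2-\alpha}(t_{j-\frac{1}{2}}-\cdot)$, $g''(\lambda)=\omega_{-\alpha}(t_{j-\frac{1}{2}}-\lambda)<0$, and exploit the bound $|\chi_k(t,\lambda)|\le(t-t_{k-\frac{3}{2}})(t_{k-\frac{1}{2}}-\lambda)/\tau_{k-\frac{1}{2}}$ together with the identity $|\omega_{-\alpha}(t_{j-\frac{1}{2}}-\lambda)|=\partial_\lambda\omega_{1-\alpha}(t_{j-\frac{1}{2}}-\lambda)$. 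Integrating by parts in $\lambda$ collapses the inner integral to $a_{j-k}^{(j)}-\omega_{1-\alpha}(t_{j-\frac{1}{2}}-t_{k-\frac{3}{2}})$, and property (V) of Lemma~\ref{property-a} bounds this by $a_{j-k-1}^{(j)}-a_{j-k}^{(j)}$; therefore $|R_k^j|\le(a_{j-k-1}^{(j)}-a_{j-k}^{(j)})I_k$ whenever $k<j$.

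Multiplying by $p_{n-j}^{(n)}$, summing over $j$, and interchanging the order of summation on the off-diagonal contribution yields
\begin{align*}
\sum_{j=1}^n p_{n-j}^{(n)}|R^j|\le\sum_{j=1}^n p_{n-j}^{(n)}a_0^{(j)}I_j+\sum_{k=1}^{n-1}I_k\sum_{j=k+1}^n p_{n-j}^{(n)}\bigl(a_{j-k-1}^{(j)}-a_{j-k}^{(j)}\bigr).
\end{align*}
The decisive step is to recognise that the inner sum on the right equals $a_0^{(k)}p_{n-k}^{(n)}$ directly from the recurrence (\ref{DCC kernel}) defining the DCC kernels; combining the two outer sums then produces the claimed factor of $2$. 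The main obstacle I anticipate is getting the constants sharp in the diagonal and off-diagonal pointwise bounds (the diagonal hinges on the rather tight inequality $(1-s)^{1-\alpha}\le 1$), and checking the $k=1$ endpoint in the off-diagonal argument, which is not directly covered by the range $2\le k\le n-1$ in property (V) but does hold under the step-size condition (\ref{step-condition}) because $\tau_{\frac{1}{2}}\le\tau_{\frac{3}{2}}$.
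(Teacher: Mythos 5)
Your proposal is correct and follows essentially the same route as the paper: the same splitting of $R^j$ into the diagonal term $R_j^j$ (bounded by $a_0^{(j)}G^j$ via monotonicity/concavity of $\omega_{2-\alpha}$) and the off-diagonal terms (bounded by $(a_{j-k-1}^{(j)}-a_{j-k}^{(j)})G^k$ via the Peano kernel of Lemma \ref{interpolation-error} and property (V) of Lemma \ref{property-a}), followed by the same summation exchange in which the recurrence (\ref{DCC kernel}) collapses the inner sum to $a_0^{(k)}p_{n-k}^{(n)}$. Your remark about the $k=1$ endpoint of property (V) is a valid observation that the paper glosses over, and your fix via $\tau_{\frac{1}{2}}\le\tau_{\frac{3}{2}}$ is the right one.
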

\begin{proof} For $1\le k\le n-1,$ by using Lemma \ref{interpolation-error}, it reads
\begin{align}
\widetilde{\Pi}_{1,k}\omega_{2-\alpha}(t_{n-\frac{1}{2}}-t)=&
\int_{t_{k-\frac{3}{2}}}^{t_{k-\frac{1}{2}}}\chi_k(t,\lambda)\omega_{2-\alpha}''(t_{n-\frac{1}{2}}-\lambda) d\lambda\nonumber\\
\le&(t_{k-\frac{3}{2}}-t)\int_{t_{k-\frac{3}{2}}}^{t_{k-\frac{1}{2}}}\omega_{2-\alpha}''(t_{n-\frac{1}{2}}-\lambda) d\lambda\nonumber\\
=&(t-t_{k-\frac{3}{2}})\Big[\omega_{1-\alpha}(t_{n-\frac{1}{2}}-t_{k-\frac{1}{2}})-\omega_{1-\alpha}(t_{n-\frac{1}{2}}-t_{k-\frac{3}{2}})\Big]\nonumber\\
\le&(t-t_{k-\frac{3}{2}})(a_{n-k-1}^{(n)}-a_{n-k}^{(n)}),~~{\rm for}~~ t\in(t_{k-\frac{3}{2}}, t_{k-\frac{1}{2}}).\label{truncation-k}
\end{align}
For $k=n,$ noticing the decreasing of $\omega_{2-\alpha}(t_{n-\frac{1}{2}}-t)$ with respect to $t$, it yields
\begin{align}
0\le \widetilde{\Pi}_{1,n}\omega_{2-\alpha}(t_{n-\frac{1}{2}}-t)\le & \omega_{2-\alpha}(t_{n-\frac{1}{2}}-t_{n-\frac{3}{2}})-\Pi_{1,n}\omega_{2-\alpha}(t_{n-\frac{1}{2}}-t)
=(t-t_{n-\frac{3}{2}})a_0^{(n)}.\label{n-error}
\end{align}
Now, we estimate the truncation errors $R_n^n$ and $R_k^n (1\le k\le n-1),$ respectively. Denote
$$G^k=\int_{t_{k-\frac{3}{2}}}^{t_{k-\frac{1}{2}}}(t-t_{k-\frac{3}{2}})|v''(t)| dt,~1\le k\le n.$$
It follows from (\ref{truncationerror-n1}) and (\ref{n-error}) that
\begin{align}
|R_{n}^n|&\le \int_{t_{n-\frac{3}{2}}}^{t_{n-\frac{1}{2}}}\widetilde{\Pi}_{1,n}\omega_{2-\alpha}(t_{n-\frac{1}{2}}-t)|v''(t)| dt\\
&\le a_0^{(n)}\int_{t_{n-\frac{3}{2}}}^{t_{n-\frac{1}{2}}}(t-t_{n-\frac{3}{2}})|v''(t)| dt=a_0^{(n)}G^n,~~n\ge1.\label{R_n^n}
\end{align}
For $1\le k\le n-1~~(n\ge2), $ with the help of (\ref{truncation-k}), we have
\begin{align}
\sum_{k=1}^{n-1}|R_{k}^n|
\le&\sum_{k=1}^{n-1}\int_{t_{k-\frac{3}{2}}}^{t_{k-\frac{1}{2}}}\widetilde{\Pi}_{1,k}\omega_{2-\alpha}(t_{n-\frac{1}{2}}-t)|v''(t)| dt\nonumber\\
\le&\sum_{k=1}^{n-1}(a_{n-k-1}^{(n)}-a_{n-k}^{(n)})\int_{t_{k-\frac{3}{2}}}^{t_{k-\frac{1}{2}}}(t-t_{k-\frac{3}{2}})|v''(t)| dt\nonumber\\
=&\sum_{k=1}^{n-1}(a_{n-k-1}^{(n)}-a_{n-k}^{(n)})G^k.\label{R_n^k}
\end{align}
Combining the inequality (\ref{R_n^n}) with the inequality (\ref{R_n^k}), the estimate holds
\begin{align}
|R^j|=\sum_{k=1}^j|R_{k}^j|\le\sum_{k=1}^{j-1}(a_{j-k-1}^{(j)}-a_{j-k}^{(j)})G^k+a_0^{(j)}G^j,~~1\le j\le n.\label{truncationerror-nn}
\end{align}
Multiplying (\ref{truncationerror-nn}) by $p_{n-j}^{(n)}$, summing up $j$ from 1 to $n$ and then exchanging the summation order
\begin{align*}
\sum_{j=1}^np_{n-j}^{(n)}|R^j|&\le \sum_{j=2}^np_{n-j}^{(n)}\sum_{k=1}^{j-1}(a_{j-k-1}^{(j)}-a_{j-k}^{(j)})G^k
+\sum_{j=1}^{n}p_{n-j}^{(n)}a_0^{(j)}G^j\\
&=\sum_{k=1}^{n-1}G^kp_{n-k}^{(n)}a_{0}^{(k)}+\sum_{j=1}^{n}G^j p_{n-j}^{(n)}a_0^{(j)}\\
&\le2\sum_{j=1}^{n} p_{n-j}^{(n)}a_0^{(j)}\int_{t_{j-\frac{3}{2}}}^{t_{j-\frac{1}{2}}}\Big(t-t_{j-\frac{3}{2}}\Big)|v''(t)| dt.
\end{align*}
The proof ends.

It follows that the error bound in Lemma \ref{truncationerror} is asymptotically compatible with the truncation error of the backward Euler scheme.
Actually, as the fractional order $\alpha\rightarrow 1,$ it yields $p_{n-j}^{(n)}\rightarrow \tau_j$ and $a_{0}^{(j)}=\frac{1}{\tau_j}$ for $1\le j\le n.$ Then
we arrive at
$$\sum_{j=1}^{n} p_{n-j}^{(n)}a_0^{(j)}\int_{t_{j-\frac{3}{2}}}^{t_{j-\frac{1}{2}}}\Big(t-t_{j-\frac{3}{2}}\Big)|v''(t)| dt
\rightarrow\sum_{j=1}^{n} \int_{t_{j-\frac{3}{2}}}^{t_{j-\frac{1}{2}}}\Big(t-t_{j-\frac{3}{2}}\Big)|v''(t)| dt,$$
which achieves the temporal order $O(\tau).$
However, the following corollary is not asymptotically compatible as the fractional order $\alpha\rightarrow1$ due to the lack of the proper estimates
for the DCC kernels $p_{n-j}^{(n)}.$
\begin{cor}\label{truncation-error2}
Assume $v\in C^2((0,T])$ and there exists a constant $c_v>0$ such that
\begin{align}
|v''(t)|\le c_v(1+t^{\sigma-2}),~~0\le t\le T,\label{v-condition}
\end{align}
where $\sigma\in(0,1)\cup (1,2)$ is a regularity parameter. If the nonuniform grid satisfies (\ref{step-condition}), it holds
$$\sum\limits_{j=1}^np_{n-j}^{(n)}|R^j|\le c_v\Big(\tau_1^\sigma
+\frac{1}{1-\alpha}\max_{2\le j\le n}(t_{j-\frac{1}{2}}-t_{\frac{1}{2}})^\alpha t_{j-\frac{3}{2}}^{\sigma-2}\tau_{j-\frac{1}{2}}^{2-\alpha}\Big),~~n\ge1.$$
\end{cor}
\begin{proof}
Noticing that $|v''(t)|\le c_v(1+t^{\sigma-2})$, we have
$$G^1\le c_v\tau_1^\sigma,~~{\rm and}~~G^k\le
 \int_{t_{k-\frac{3}{2}}}^{t_{k-\frac{1}{2}}}(t-t_{k-\frac{3}{2}})c_vt^{\sigma-2} dt
 \le c_v (\tau_{k-\frac{1}{2}})^2t_{k-\frac{3}{2}}^{\sigma-2},~~2\le k\le n.$$
It follows from $p_{n-k}^{(n)}>0,~~a_{n-k}^{(n)}>0$ and the identity (\ref{DCC kernel}) that $\sum_{j=2}^np_{n-j}^{(n)}a_{j-2}^{(j)}= 1.$ Then making use of Lemma \ref{property-a} (IV), one arrives at
\begin{align*}
\sum_{j=1}^np_{n-j}^{(n)}|R^j|&\le 2p_{n-1}^{(n)}a_0^{(1)}G^1 +2\sum_{j=2}^{n}G^j p_{n-j}^{(n)}a_0^{(j)}\\
&\le2G^1+\frac{2}{1-\alpha}\sum_{j=2}^{n}G^j p_{n-j}^{(n)}a_{j-2}^{(j)}(t_{j-\frac{1}{2}}-t_{\frac{1}{2}})^\alpha (\tau_{j-\frac{1}{2}})^{-\alpha}\\
&\le c_v\tau_1^\sigma+\frac{c_v}{1-\alpha}\sum_{j=2}^{n}p_{n-j}^{(n)}a_{j-2}^{(j)}(t_{j-\frac{1}{2}}-t_{\frac{1}{2}})^\alpha t_{j-\frac{3}{2}}^{\sigma-2} (\tau_{j-\frac{1}{2}})^{2-\alpha}\\
&\le c_v\Big(\tau_1^\sigma+\frac{1}{1-\alpha}\max_{2\le j\le n}(t_{j-\frac{1}{2}}-t_{\frac{1}{2}})^\alpha t_{j-\frac{3}{2}}^{\sigma-2} (\tau_{j-\frac{1}{2}})^{2-\alpha}\Big).
\end{align*}
The proof ends.
\end{proof}
\begin{remark} Giving a uniform mesh $\tau=\frac{T}{N}$ and $t_k=k\tau,$ then it follows from Corollary \ref{truncation-error2} that
\begin{align*}
\sum_{j=1}^np_{n-j}^{(n)}|R^j|
=& c_v\Big(\tau^\sigma+\frac{1}{2-\beta}\tau^{\min\{\sigma,~ 3-\beta\}}
\max_{2\le k\le n}\Big(k-\frac{3}{2}\Big)^{\beta+\sigma-3}\tau^{\sigma-\min\{\sigma,~ 3-\beta\}}\Big)\\
\le&c_v\Big(\tau^\sigma+t_{k-\frac{3}{2}}^{\sigma-\min\{\sigma,~ 3-\beta\}}\tau^{\min\{\sigma,~ 3-\beta\}}\Big).
\end{align*}
 The above error estimate shows that the convergence order of the difference scheme in time increases along with the improvement of the regularity of the solution for
$\sigma\le 3-\beta.$ Moreover, the convergence order achieves the accuracy of $O(\tau^{3-\beta})$ for $\sigma\in[3-\beta,2).$
\end{remark}

Besides, we consider the truncation error on the graded time mesh $t_k=T(k/N)^\gamma$ with $\gamma>1$
\begin{align*}
\Big(t_{\frac{3}{2}}-t_{\frac{1}{2}}\Big)^{\beta-1} t_{\frac{1}{2}}^{\sigma-2}\tau_{\frac{3}{2}}^{3-\beta}
\le\Big(T(\frac{2}{N})^\gamma\Big)^2\Big(\frac{1}{2}T(\frac{1}{N})^\gamma\Big)^{(\sigma-2)}
=2^{2\gamma-2+\sigma}T^\sigma N^{-\sigma\gamma}
\end{align*}
and it is easy to check the time-step $\tau_{k}\le TN^{-\gamma}\gamma k^{\gamma-1}$. Noticing that $k\le 3(k-2),~k\ge3,$ we have
\begin{align*}
&(t_{k-\frac{1}{2}}-t_{\frac{1}{2}})^{\beta-1} t_{k-\frac{3}{2}}^{\sigma-2}\tau_{k-\frac{1}{2}}^{3-\beta}\\
\le& T^{\beta-1}\Big(\frac{k}{N}\Big)^{(\beta-1)\gamma}
T^{\sigma-2}\Big(\frac{k-2}{N}\Big)^{\gamma(\sigma-2)}\Big(\gamma k^{\gamma-1}TN^{-\gamma}\Big)^{3-\beta}\\
\le&T^\sigma3^{2(\gamma-1)
+\beta-1}(k-2)^{\min\{\gamma\sigma,~3-\beta\}-(3-\beta)}
\Big(\frac{k-2}{N}\Big)^{\gamma\sigma-\min\{\gamma\sigma,~3-\beta\}}\gamma^{3-\beta}N^{-\min\{\gamma\sigma,~3-\beta\}}\\
\le& T^\sigma3^{2(\gamma-1)
+\beta-1}\gamma^{3-\beta}N^{-\min\{\gamma\sigma,~3-\beta\}},~~k\ge 3,
\end{align*}
the above inequalities gives
\begin{align}\label{estimat3}
\sum_{j=1}^np_{n-j}^{(n)}|R^j|
\le c_v N^{-\min\{\gamma\sigma,~ 3-\beta\}},
\end{align}
where $c_v$ is a constant.
\begin{remark}
It follows from the estimate \eqref{estimat3} that the temporal convergence order improves as $\gamma$ increases and the accuracy achieves the optimal $O(N^{\beta-3})$ as taking $\gamma=\max\{1,~(3-\beta)/\sigma\}.$
\end{remark}
 \section{The temporal nonuniform $L1$ type difference scheme}\label{Sec.3}

 In this section, we construct a nonuniform difference scheme for the time fractional diffusion-wave equation.
 Applying the order reduction technique, the problem (\ref{diffusion-equation})-(\ref{boundaryvalue}) can be rewritten by an equivalent equations.
Let $\alpha=\beta-1$  and \begin{align}
v({\rm x},t)= \frac{\partial u}{\partial t}({\rm x},t),\label{0.1}
\end{align}
it reduces \begin{align}
{}_0^CD_t^\beta u({\rm x},t)=&\frac{1}{\Gamma(2-\beta)}\int_0^t
 \frac{\partial^2 u}{\partial s^2}({\rm x},s)\frac{1}{(t-s)^{\beta-1}}ds\nonumber\\
=&\frac{1}{\Gamma(1-\alpha)}\int_0^{t}
 \frac{\partial v}{\partial s}({\rm x},s)\frac{1}{(t-s)^{\alpha}}ds\nonumber\\
 =&{}_0^CD_t^\alpha v({\rm x},t).\label{3.3}
\end{align}
Thus,  Eqs (\ref{diffusion-equation})-(\ref{boundaryvalue}) are equivalent to
\begin{align}
&{}_0^CD_t^\alpha v({\rm x},t)=\Delta u({\rm x},t)+f({\rm x},t),~{\rm x}\in\Omega,~t\in(0,T],\label{equivalent-equation}\\
 & v({\rm x},t)=u_t({\rm x},t),~{\rm x}\in(a,b),~t\in(0,T],\label{v-u}\\
 &u({\rm x},0)=\varphi_1({\rm x}),~v({\rm x},0)=\varphi_2({\rm x}),~{\rm x}\in\bar{\Omega}. \label{v-u-initialvalue}
\end{align}

Let $M$ be a positive integer. Set $\Omega=(0,L)^2$ and $x_i=ih,~y_j=jh$ with the spatial lengths $h=L/M.$
The discrete spatial grid
$\Omega_h:=\big\{{\rm x}_h=(x_i, y_j)~|~1\le i, j\le M-1\big\}$
and $\bar{\Omega}_h:=\big\{{\rm x}_h~|~0\le i, j\le M\big\}.$
Denote $$\mathcal{V}_h:=\big\{v_h=v(\mathrm{x}_h)~|~\mathrm{x}_h\in \bar{\Omega}_h
\;\text{and $v_h$ is $L$-periodic in each direction}\big\}.$$
Given a grid function $v_h\in \mathcal{V}_h,$ introduce the following notations
$\delta_xv_{i+\frac{1}{2},j}=(v_{i+1,j}-v_{ij})/h$
and $\delta^2_xv_{ij}=(\delta_xv_{i+\frac{1}{2},j}-\delta_xv_{i-\frac{1}{2},j})/h.$
Similarly, we define $\delta_yv_{i,j+\frac{1}{2}}$ and $\delta^2_yv_{ij}.$
The discrete Laplacian operator $\Delta_hv_{ij}=\delta^2_xv_{ij}+\delta^2_yv_{ij}$ and
the discrete gradient vector $\nabla_hv_{ij}=(\delta_xv_{i-\frac{1}{2}, j},~\delta_yv_{i, j-\frac{1}{2}})^T$ can be defined.
For any $u,v\in\mathcal{V}_h,$
the inner product and norms are defined by
$$(u,v)=h^2\sum_{{\rm x}_h\in \Omega_h}u_hv_h,~~\|u\|=\sqrt{(u,u)}.$$

Considering the equation (\ref{equivalent-equation}) and (\ref{v-u}) at the point $({\rm x}_h,t_{n-\frac{1}{2}}),$ it yields
\begin{align*}
&\mathcal{D}_{\tau}^\alpha V_h^{n-\frac{1}{2}}=\Delta_hU_h^{n-\frac{1}{2}}+f_h^{n-\frac{1}{2}}+\Upsilon_h^{n},~~{\rm x}_h\in \Omega_h,~~1\le n\le N-1,\\
&V_h^{n-\frac{1}{2}}=\delta_tU_h^{n-\frac{1}{2}}+r_h^{n},~~,{\rm x}_h\in \Omega_h,~~1\le n\le N,\\
&U_h^0=\varphi_1({\rm x}_h),~~V_h^0=\varphi_2({\rm x}_h),~~{\rm x}_h\in \bar{\Omega}_h,
\end{align*}
where $\Upsilon_h^{n}=R_h^{n}+\xi_h^{n},$ and $R_h^{n}$ is the truncation error in time direction, $\xi_h^{n}$ is the truncation error in space direction.
There exists a constant $c_0$ such that
\begin{align}
|r_h^n|\le c_0\tau_n^2,~~|\xi_h^{n}|\le c_0 h^2,~~{\rm x}_h\in \Omega_h,~1\le n\le N-1.\label{truncationerror-time-space}
\end{align}
Omitting the truncation errors, we construct the difference scheme for the fractional diffusion-wave equation as follows
 \begin{align}
&\mathcal{D}_{\tau}^\alpha v_h^{n-\frac{1}{2}}=\Delta_hu_h^{n-\frac{1}{2}}+f_h^{n-\frac{1}{2}},~~{\rm x}_h\in\Omega_h,~~1\le n\le N-1,\label{discrete-scheme}\\
&v_h^{n-\frac{1}{2}}=\delta_tu_h^{n-\frac{1}{2}},~~{\rm x}_h\in\Omega_h,~~1\le n\le N,\label{discrete-v-u}\\
&u_h^0=\varphi_1({\rm x}_h),~~v_h^0=\varphi_2({\rm x}_h),~~{\rm x}_h\in\bar{\Omega}_h.\label{discrete-u-v-initialvalue}
\end{align}
\end{proof}
\section{The error estimate of the difference scheme}\label{Sec.4}

Applying the important discrete tool $p_{n-k}^{(n)},$ we present the convergence analysis of
the nonuniform difference scheme (\ref{discrete-scheme})-(\ref{discrete-u-v-initialvalue}).
Denote $$e_h^n=U_h^n-u_h^n,~~\rho_h^n=V_h^n-v_h^n,~~{\rm x}_h\in \bar{\Omega}_h,~0\le n\le N,$$
the error equation gives as follows
\begin{align}
&\mathcal{D}_{\tau}^\alpha \rho_h^{n-\frac{1}{2}}=\Delta_he_h^{n-\frac{1}{2}}+\Upsilon_h^{n},~~{\rm x}_h\in\Omega_h,~~1\le n\le N,\label{error-scheme}\\
&\rho_h^{n-\frac{1}{2}}=\delta_te_h^{n-\frac{1}{2}}+r_h^{n},~~{\rm x}_h\in\Omega_h,~~1\le n\le N,\label{error-v-u}\\
&e_h^0=0,~~\rho_h^0=0,~~{\rm x}_h\in\bar{\Omega}_h.\label{error-u-v-initialvalue}
\end{align}

\begin{theorem}\label{convergence-th}
Suppose the problem (\ref{diffusion-equation}) has a unique smooth solution and $u_h^n\in\mathcal{V}_h$ is the solution of the difference scheme  (\ref{discrete-scheme})-(\ref{discrete-u-v-initialvalue}). The proposed scheme (\ref{discrete-scheme})-(\ref{discrete-u-v-initialvalue}) is convergent in $L^2$ norm,
$$ \|e^n\|\le c_v\Big(\max_{1\le k\le n}\sum_{j=1}^{k} p_{k-j}^{(k)}a_0^{(j)}\int_{t_{j-\frac{3}{2}}}^{t_{j-\frac{1}{2}}}\Big(t-t_{j-\frac{3}{2}}\Big)|\partial_{tt}U| dt+t_{n-\frac{1}{2}}^{\beta-1} h^2\Big).$$
\end{theorem}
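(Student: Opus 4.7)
The plan is to eliminate the intermediate variable $\rho$ by inverting the discrete fractional operator in \eqref{error-scheme} with the DCC kernels, thereby reducing the coupled error system to a single first-order-in-time recursion for $e^n$. Specifically, multiplying \eqref{error-scheme} by $p_{n-j}^{(n)}$, summing $j$ from $1$ to $n$, and invoking the orthogonal identity \eqref{DCC equlity} together with $\rho^0=0$, the inner telescoping in $\nabla_\tau \rho^{k-1/2}$ gives $\rho^{n-1/2}=\sum_{j=1}^n p_{n-j}^{(n)}\bigl(\Delta_h e^{j-1/2}+\Upsilon^j\bigr)$. Substituting this expression into \eqref{error-v-u} produces
$$
\delta_t e^{n-1/2}-\sum_{j=1}^n p_{n-j}^{(n)}\Delta_h e^{j-1/2}=F^n,\qquad F^n:=\sum_{j=1}^n p_{n-j}^{(n)}\Upsilon^j-r^n,
$$
so the fractional-in-time error equation is traded for a nonlocal parabolic-type scheme with source $F^n$.

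Next I would run an $L^2$ energy estimate: take the inner product with $2 e^{n-1/2}$, use the discrete Green's identity $(\Delta_h u,v)=-(\nabla_h u,\nabla_h v)$, multiply by $\tau_n$, and sum over $n=1,\ldots,m$. The left side telescopes to $\|e^m\|^2$ (using $e^0=0$) and a cross bilinear form $2\sum_{n=1}^m\tau_n\sum_{j=1}^n p_{n-j}^{(n)}\bigl(\nabla_h e^{j-1/2},\nabla_h e^{n-1/2}\bigr)$ appears. The positive-definiteness of the DCC kernels from Lemma \ref{DCCposi}, applied coordinate-wise to the gradient vectors, should render this cross form nonnegative, so it may be discarded. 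To pass from an $\|e^m\|^2$ bound to an $\|e^m\|$ bound with the advertised max-structure, I would work directly from the pointwise identity $\|e^n\|^2-\|e^{n-1}\|^2\le 2\tau_n\|\delta_t e^{n-1/2}\|\,\|e^{n-1/2}\|$, divide by $\|e^n\|+\|e^{n-1}\|$, and telescope to obtain $\|e^m\|\le\sum_{k=1}^m\tau_k(\|\rho^{k-1/2}\|+\|r^k\|)$; coupled with the energy inequality, this yields $\|e^m\|\le C\max_{1\le n\le m}\bigl(\sum_{j=1}^n p_{n-j}^{(n)}\|\Upsilon^j\|+\|r^n\|\bigr)$. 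Splitting $\Upsilon^n=R^n+\xi^n$, Lemma \ref{truncationerror} (applied to $V$) controls $\sum_j p_{n-j}^{(n)}|R^j|$ by exactly the integral expression appearing in the theorem, while $\|\xi^j\|\le c_0 h^2$ together with Lemma \ref{DCC-property} yields $\sum_j p_{n-j}^{(n)}\|\xi^j\|\le c_0 h^2\,\omega_{1+\alpha}(t_{n-1/2})\le C\,t_{n-1/2}^{\beta-1}h^2$, and the $O(\tau_n^2)$ term $\|r^n\|$ is absorbed into the temporal integral via the regularity of $U_{tt}$.

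The main obstacle will be reconciling the unweighted positive-definiteness of Lemma \ref{DCCposi} with the $\tau_n$-weighted double sum generated by the energy balance on the nonuniform mesh: the standard statement $\sum_n\sum_j p_{n-j}^{(n)}w^n w^j\ge 0$ does not apply verbatim to $\sum_n\tau_n\sum_j p_{n-j}^{(n)}(\nabla_h e^{j-1/2},\nabla_h e^{n-1/2})$, and bridging the gap calls for the step-size monotonicity \eqref{step-condition} together with the kernel monotonicity properties (I)--(III) of Lemma \ref{property-a}. This is the delicate technical core that is expected to deliver the unconditional $L^2$ convergence without any CFL coupling between the time step and the spatial mesh.
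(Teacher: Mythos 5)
Your proposal follows essentially the same route as the paper: invert $\mathcal{D}_\tau^\alpha$ with the DCC kernels via \eqref{DCC equlity} to obtain $\rho^{n-\frac{1}{2}}=\sum_{j=1}^n p_{n-j}^{(n)}\big(\Delta_h e^{j-\frac{1}{2}}+\Upsilon^j\big)$, test against $e^{n-\frac{1}{2}}$, drop the gradient cross-form by positive definiteness, and close with Lemmas \ref{truncationerror} and \ref{DCC-property}. Two remarks. First, the ``main obstacle'' you flag is real, but the paper does not resolve it: it applies Lemma \ref{DCCposi} verbatim to the $\tau_k$-weighted form $\sum_{k=1}^n\tau_k\sum_{l=1}^{k}p_{k-l}^{(k)}\langle\nabla_h e^{l-\frac{1}{2}},\nabla_h e^{k-\frac{1}{2}}\rangle$ without comment, even though positive definiteness of the kernels $p_{k-l}^{(k)}$ does not in general transfer to $\tau_k p_{k-l}^{(k)}$ (with $D=\mathrm{diag}(\tau_k)$ and $P$ the lower-triangular kernel matrix, $P+P^{T}\succeq 0$ does not imply $DP+P^{T}D\succeq 0$); so you have located a gap in the published argument rather than one you must fill beyond what the authors themselves do. Second, for the passage from $\|e^n\|^2$ to $\|e^n\|$ the paper does not use your division-and-telescoping variant --- that route places $\|\rho^{k-\frac{1}{2}}\|$, and hence $\Delta_h e$, inside a norm where the positive definiteness can no longer be exploited --- but instead chooses $n_0$ with $\|e^{n_0}\|=\max_{0\le k\le n}\|e^k\|$, discards the gradient term in the globally summed identity, obtains $\|e^{n_0}\|^2\le(\cdots)\,\|e^{n_0}\|$, and divides; you should adopt that step. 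The remainder of your argument (the splitting $\Upsilon=R+\xi$, the bound $\sum_j p_{n-j}^{(n)}\le\omega_{1+\alpha}(t_{n-\frac{1}{2}})$ giving the $t_{n-\frac{1}{2}}^{\beta-1}h^2$ term, and the treatment of $r^n$) coincides with the paper.
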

\begin{proof} Multiplying (\ref{error-scheme}) by {$p_{n-k}^{(n)}$} and summing up $k$ from 1 to $n$, it yields
\begin{align}
\sum_{k=1}^{n}{p_{n-k}^{(n)}}\sum_{j=1}^ka_{k-j}^{(k)}\nabla_\tau\rho_h^{j-\frac{1}{2}}= \sum_{k=1}^{n}{p_{n-k}^{(n)}}\Delta_h e_h^{k-\frac{1}{2}}
+\sum_{k=1}^{n}{p_{n-k}^{(n)}}\Upsilon_h^{k}.\label{theta-equation}
\end{align}
Exchanging the summation order, we have
\begin{align*}
 \sum_{k=1}^{n}{p_{n-k}^{(n)}}\sum_{j=1}^ka_{k-j}^{(k)}\nabla_\tau\rho_h^{j-\frac{1}{2}}
 =\sum_{j=1}^n\nabla_\tau\rho_h^{j-\frac{1}{2}}\sum_{k=j}^n{p_{n-k}^{(n)}}a_{k-j}^{(k)}={\rho_h^{n-\frac{1}{2}}}.
\end{align*}
Consequently, it yields
\begin{align}
\rho_h^{n-\frac{1}{2}}=\sum_{k=1}^np_{n-k}^{(n)}\Delta_he_h^{k-\frac{1}{2}}
+\sum_{k=1}^np_{n-k}^{(n)}\Upsilon_h^{k}.\label{rho-e}
\end{align}
Taking the inner product of (\ref{rho-e}) with $e^{n-\frac{1}{2}},$ we have
\begin{align}
\langle\rho^{n-\frac{1}{2}}, e^{n-\frac{1}{2}}\rangle=\sum_{k=1}^np_{n-k}^{(n)}\langle\Delta_he^{k-\frac{1}{2}}, e^{n-\frac{1}{2}}\rangle
+\sum_{k=1}^np_{n-k}^{(n)}\langle\Upsilon^{k}, e^{n-\frac{1}{2}}\rangle,\label{rho-e-inner1}
\end{align}
and the inner product of (\ref{error-v-u}) with $e^{n-\frac{1}{2}}$, one arrives at
\begin{align}
\langle\rho^{n-\frac{1}{2}}, e^{n-\frac{1}{2}}\rangle=\langle\delta_t e^{n-\frac{1}{2}}, e^{n-\frac{1}{2}}\rangle+\langle r^n, e^{n-\frac{1}{2}}\rangle.\label{rho-e-inner2}
\end{align}
Substituting (\ref{rho-e-inner2}) into (\ref{rho-e-inner1}), and summing $k$ from 1 to $n$, we get
\begin{align*}
\sum_{k=1}^n\langle\nabla_\tau e^{k}, e^{k-\frac{1}{2}}\rangle=&\sum_{k=1}^n\tau_k\sum_{l=1}^kp_{k-l}^{(k)}\langle\Delta_he^{l-\frac{1}{2}}, e^{k-\frac{1}{2}}\rangle
+\sum_{k=1}^n\tau_k\sum_{l=1}^kp_{k-l}^{(k)}\langle\Upsilon^{l}, e^{k-\frac{1}{2}}\rangle\\
&-\sum_{k=1}^n\tau_k\langle r^k, e^{k-\frac{1}{2}}\rangle.
\end{align*}
It is easy to obtain
\begin{align*}
\sum_{k=1}^n\langle \nabla_\tau e^{k}, e^{k-\frac{1}{2}}\rangle=\frac{1}{2}(\|e^n\|^2-\|e^0\|^2).
\end{align*}
With the help of the positive definiteness of $p_{k-l}^{(k)}$, we have
$$\sum_{k=1}^n\tau_k\sum_{l=1}^kp_{k-l}^{(k)}\langle\Delta_he^{l-\frac{1}{2}}, e^{k-\frac{1}{2}}\rangle
=-\sum_{k=1}^n\tau_k\sum_{l=1}^kp_{k-l}^{(k)}\langle \nabla_he^{l-\frac{1}{2}}, \nabla_he^{k-\frac{1}{2}}\rangle\le 0.$$
Then, it follows that
\begin{align}
\|e^n\|^2\le 2\sum_{k=1}^n\tau_k\sum_{l=1}^kp_{k-l}^{(k)}\|\Upsilon^{l}\|\cdot\|e^{k-\frac{1}{2}}\|
+2\sum_{k=1}^n\tau_k\|r^k\|\cdot\|e^{k-\frac{1}{2}}\|.
\end{align}

Choosing some integer $n_0 (0\le n_0\le n)$ such that $\|e^{n_0}\|=\max\limits_{0\le k\le n}\|e^k\|$ and then taking $n=n_0$ in the above inequality,
it yields
\begin{align*}
\|e^{n_0}\|^2\le 2\sum_{k=1}^n\tau_k\Big(\sum_{l=1}^kp_{k-l}^{(k)}\|\Upsilon^{l}\|\Big)\cdot\|e^{n_0}\|+2\sum_{k=1}^n\tau_k\|r^k\|\cdot\|e^{n_0}\|.
\end{align*}
Consequently, by virtue of Lemma \ref{DCC-property}, Lemma \ref{truncationerror} and noticing $\alpha=\beta-1$, it yields
\begin{align*}
\|e^n\|\le\|e^{n_0}\|\le&2\sum_{k=1}^n\tau_k\Big(\sum_{l=1}^kp_{k-l}^{(k)}\|\Upsilon^{l}\|\Big)
+2\sum_{k=1}^n\tau_k\|r^k\|\\
\le&2c_0t_n\tau_n^2
+2t_n\max_{1\le k\le n}\sum_{l=1}^kp_{k-l}^{(k)}(\|R^{l}\|+\|\xi^l\|)\\
\le&c_v\Big(\max_{1\le k\le n}\sum_{j=1}^{k} p_{k-j}^{(k)}a_0^{(j)}\int_{t_{j-\frac{3}{2}}}^{t_{j-\frac{1}{2}}}\Big(t-t_{j-\frac{3}{2}}\Big)|\partial_{tt}U| dt+t_{n-\frac{1}{2}}^{\beta-1} h^2+t_n\tau_n^2\Big).
\end{align*}
Thus we obtain the desired result. The proof ends.
\end{proof}

\begin{remark} The present analysis takes advantage of the non-negative (Lemma \ref{DCC-property}) and the positive definiteness (Lemma \ref{DCCposi}) of the DCC kernels defined by \eqref{DCC equlity}. The current framework is extendable to the nonlinear diffusion-wave problems if the numerical solution is bounded in certain discrete norm (such as the $H^1$ norm). This issue is interesting and will be studied in the further study.

To improve the time accuracy, one can employ some high-order approximations for the reduced equation (\ref{equivalent-equation}). Typically,  we can apply the fractional BDF2 formula\cite{Liao-Liu-Zhao, Quan-Wu} to build the following second-order variable-step scheme
\begin{align*}
&D_{\tau}^\alpha v_h^{n}=\Delta_hu_h^{n}+f_h^{n},~~{\rm x}_h\in\Omega_h,~~1\le n\le N-1,\\
&v_h^{n}=\frac{1+2r_n}{\tau_n(1+r_n)}\nabla_\tau u_h^n-\frac{r_n^2}{\tau_n(1+r_n)}\nabla_\tau u_h^{n-1},~~{\rm x}_h\in\Omega_h,~~1\le n\le N,\\
&u_h^0=\varphi_1({\rm x}_h),~~v_h^0=\varphi_2({\rm x}_h),~~{\rm x}_h\in\bar{\Omega}_h,
\end{align*}
where the fractional BDF2 formula is given as $D_{\tau}^\alpha v_h^{n}=\sum\limits_{k=1}^{n}B_{n-k}^{(n)}\nabla_\tau v_h^k$ and the coefficients are written as
\begin{align*}
&B_0^{(1)}=a_0^{(1)},~~B_0^{(n)}=a_0^{(n)}+\frac{r_n^2\varpi_0^{(n)}+\varpi_1^{(n)}}{r_n(1+r_n)},~n\ge2,\\
&B_1^{(n)}=a_1^{(n)}-\frac{r_n^2\varpi_0^{(n)}+\varpi_1^{(n)}}{1+r_n}+\frac{\varpi_2^{(n)}}{r_{n-1}(1+r_{n-1})},~n\ge 2,\\
&B_{n-k}^{(n)}=a_{n-k}^{(n)}-\frac{\varpi_{n-k}^{(n)}}{1+r_{k+1}}+\frac{\varpi_{n-k+1}^{(n)}}{r_{k}(1+r_{k})},~2\le k\le n-1,~n\ge3,\\
&B_{n-1}^{(n)}=a_{n-1}^{(n)}-\frac{\varpi_{n-1}^{(n)}}{1+r_2},~n\ge 2,
\end{align*} in which the parameters $a_{n-k}^{(n)}$ and  $\varpi_{n-k}^{(n)}$ are shown in the integral forms $$a_{n-k}^{(n)}=\frac{1}{\tau_k}\int_{t_{k-1}}^{t_k}\omega_{1-\alpha}(t_n-s) ds,~1\le k\le n$$ and
$$\varpi_{n-k}^{(n)}=\frac{1}{\tau_k}\int_{t_{k-1}}^{t_k}\frac{2s-t_k-t_{k-1}}{\tau_k}\omega_{1-\alpha}(t_n-s) ds,~1\le k\le n.$$
\end{remark}
Another second-order nonuniform scheme can also be constructed by using the well-known  $L2-1\sigma$ formula\cite{Liao-Tang-Zhou}. The associated stability and convergence analysis on nonuniform time meshes are also very interesting, however, these tasks are rather challenging due to  the nonuniform setting and the inhomogeneity of the discrete coefficients $B_{n-k}^{(n)}$.  These issues are planned to explore and will be presented in separate reports.

\section{Numerical Experiment}\label{Sec.5}

In this section, some numerical examples are demonstrated for the fractional diffusion-wave equation to verify the efficiency of the difference scheme (\ref{discrete-scheme})-(\ref{discrete-u-v-initialvalue}).

\begin{example}\label{ex5.1}
Take $\Omega=(0,2\pi)^2,$ consider the problem (\ref{diffusion-equation}) with the source term
\begin{align*}
f({\rm x},t)=\Big(2t^{\sigma+1}+\frac{\Gamma(2+\sigma)}{\Gamma(\sigma+2-\beta)}t^{\sigma+1-\beta}\Big)\sin x\sin y.
\end{align*}
The problem has an exact solution
$$u({\rm x},t)=t^{\sigma+1}\sin x\sin y.$$
\end{example}
Denote $$ e(N)=\|U^N-u^N\|_\infty,~~{\rm{Order_{\tau}=log_{2}}}(e(N)/e(2N))$$ for the error and the convergence order.

Example \ref{ex5.1} is presented to measure the accuracy of the difference scheme (\ref{discrete-scheme})-(\ref{discrete-u-v-initialvalue}) in time direction. We take the graded mesh $t_n=T(n/N)^\gamma.$ The spatial grid
node is fixed to $M=1000.$ Table \ref{accuracy-table1} and Table \ref{accuracy-table2} demonstrate the $L^2$ norm errors $e(N)$ and the convergence orders of the difference scheme
(\ref{discrete-scheme})-(\ref{discrete-u-v-initialvalue}) in time direction with the regularity parameter $\sigma=\beta-1$ and $\sigma=\beta/2$ for $\gamma=1, 2, 3,$ respectively. From two tables, we observe that
the difference scheme (\ref{discrete-scheme})-(\ref{discrete-u-v-initialvalue}) reaches the accuracy of $O(N^{-\min\{\gamma\sigma, ~3-\beta\}})$ in time which is in accord with our theoretical result.

\begin{table}[htbp]
\begin{center}
\renewcommand{\arraystretch}{1.12}
\def\temptablewidth{0.9\textwidth}
\rule{\temptablewidth}{1pt}
{\footnotesize
\begin{tabular*}{\temptablewidth}{@{\extracolsep{\fill}}c|cccccccc}
&&\multicolumn{2}{c} {$\gamma=1$}~~&\multicolumn{2}{c} {$\gamma=2$}~~&\multicolumn{2}{c} {$\gamma=3$}\\
\cline{3-4}\cline{5-6} \cline{7-8}
$\beta$~&$N$&$e(N)$& ${\rm order}_\tau$ &$e(N)$& ${\rm order}_\tau$&$e(N)$&${\rm order}_\tau$\\
\hline
\multirowcell{8}&$ 40$~~~ &  2.79e-01   &   -   &  2.01e-01  &   -     & 1.39e-01  &    -  \\
                &$ 80$~~~ &  2.66e-01   &  0.07 &  1.75e-01  &  0.20   & 1.13e-01  &  0.30 \\
         $1.1~~$&$160$~~~ &  2.51e-01   &  0.09 &  1.52e-01  &  0.20   & 9.18e-02  &  0.30 \\
                &$320$~~~ &  2.35e-01   &  0.09 &  1.33e-01  &  0.20   & 7.45e-02  &  0.30  \\
\hline
&$\min\{\gamma\sigma, 3-\beta\}$&&0.1&&0.2&&0.3\\
\hline
\multirowcell{8}&$40 $~~~ &  5.56e-02   &  -    &  9.19e-03   &   -     & 9.19e-05  &    - \\
                &$80 $~~~ &  4.28e-02   &  0.38 &  4.93e-03   &  0.90   & 2.69e-05  &  1.72 \\
         $1.5~~$&$160$~~~ &  3.19e-02   &  0.42 &  2.57e-03   &  0.94   & 8.60e-06  &  1.70 \\
                &$320$~~~ &  2.34e-02   &  0.45 &  1.33e-03   &  0.96   & 2.81e-06  &  1.61 \\
\hline
&$\min\{\gamma\sigma, 3-\beta\}$&&0.5&&1.0&&1.5\\
\hline
\multirowcell{8}&$40 $~~~ &  8.45e-04   &  -    & 2.94e-03    &  -      & 3.64e-03  &    -  \\
                &$80 $~~~ &  9.82e-04   &  -0.22& 1.43e-03    &  1.04   & 1.66e-03  &  1.13  \\
         $1.9~~$&$160$~~~ &  7.99e-04   &  0.30 & 6.86e-04    &  1.06   & 7.67e-04  &  1.12  \\
                &$320$~~~ &  5.70e-04   &  0.49 &3.25e-04     &  1.08   & 3.55e-04  &  1.11   \\
\hline
&$\min\{\gamma\sigma, 3-\beta\}$&&0.9&&1.1&&1.1\\
\hline
\end{tabular*}}
\rule{\temptablewidth}{1pt}
\end{center}
\tabcolsep 0pt \caption{$L_2$ errors and convergence orders of the difference scheme in time for $\sigma=\beta-1.$}\label{accuracy-table1}
\end{table}

\begin{example}
Consider the following fractional Klein-Gordon equation
$${}_0^CD_t^\beta u-\varepsilon^2\Delta u+u^3=f,~~{\rm x}_h\in(0,2\pi)^2,~t\in [0,1],$$
subjected to the periodic boundary condition, where $f$ is a source term such that the exact solution is
$$u({\rm x}_h,t)=t^\beta \sin x\sin y.$$
\end{example}

\begin{table}[htbp]
\begin{center}
\renewcommand{\arraystretch}{1.12}
\def\temptablewidth{0.9\textwidth}
\rule{\temptablewidth}{1pt}
{\footnotesize
\begin{tabular*}{\temptablewidth}{@{\extracolsep{\fill}}c|cccccccc}
&&\multicolumn{2}{c} {$\gamma=1$}~~&\multicolumn{2}{c} {$\gamma=2$}~~&\multicolumn{2}{c} {$\gamma=3$}\\
\cline{3-4}\cline{5-6}\cline{7-8}
$\beta$~&$N$&$e(N)$& ${\rm order}_\tau$ & $e(N)$& ${\rm order}_\tau$ &$e(N)$&${\rm order}_\tau$\\
\hline
\multirowcell{8}&$ 40$~~~ &  5.75e-02   &   -    &  7.71e-03   &   -     & 7.10e-04  &    -   \\
                &$ 80$~~~ &  4.02e-02   & 0.52   &  3.64e-03   &  1.08   & 2.43e-04  &  1.55  \\
         $1.1~~$&$160$~~~ &  2.78e-02   & 0.53   &  1.71e-03   &  1.09   & 8.30e-05  &  1.55   \\
                &$320$~~~ &  1.91e-02   & 0.54   &  8.02e-04   &  1.09   & 2.89e-05  &  1.52   \\
\hline
&$\min\{\gamma\sigma, 3-\beta\}$&&0.55&&1.1&&1.65\\
\hline
\multirowcell{8}&$40 $~~~ &  2.54e-02   &   -    &  9.88e-04   &   -     & 1.18e-03  &    -   \\
                &$80 $~~~ &  1.63e-02   & 0.64   &  3.71e-04   &  1.41   & 4.06e-04  &  1.54    \\
         $1.5~~$&$160$~~~ &  1.02e-02   & 0.68   &  1.37e-04   &  1.44   & 1.39e-04  &  1.54     \\
                &$320$~~~ &  6.24e-03   & 0.71   &  5.01e-05   &  1.45   & 4.75e-05  &  1.55     \\
\hline
&$\min\{\gamma\sigma, 3-\beta\}$&&0.75&&1.5&&1.5\\
\hline
\multirowcell{8}&$40 $~~~ &  2.86e-03   &   -    & 1.45e-03    &  -      & 1.98e-03  &    -    \\
                &$80 $~~~ &  1.95e-03   & 0.55   & 6.93e-04    &  1.06   & 8.67e-04  &  1.19    \\
         $1.9~~$&$160$~~~ &  1.23e-03   & 0.66   & 3.31e-04    &  1.07   & 3.89e-04  &  1.16     \\
                &$320$~~~ &  7.46e-04   & 0.72   & 1.57e-04    &  1.08   & 1.77e-04  &  1.13    \\
\hline
&$\min\{\gamma\sigma, 3-\beta\}$&&0.95&&1.1&&1.1\\
\hline
\end{tabular*}}
\rule{\temptablewidth}{1pt}
\end{center}
\tabcolsep 0pt \caption{$L_2$ errors and convergence orders of the difference scheme in time for $\sigma=\beta/2.$}\label{accuracy-table2}
\end{table}

The purpose of this test is to verify the convergence rate of the scheme in time for the fractional Klein-Gordon equation. We also take the graded mesh $t_n=T(n/N)^\gamma$ on the interval $[0,1].$ The spatial domain $(0,2\pi)^2$ is
discretized with $1000^2$ grids. Table \ref{accuracy-order-table} lists the $L^2$ norm errors and the convergence orders for the different fractional orders $\beta=1.2, 1.5, 1.9$ and grading parameters $\gamma=2,3, 5$, and the regularity parameter $\sigma=\beta-1$.
From Table \ref{accuracy-order-table}, we observe that the difference scheme achieves the expected temporal accuracy $O(\tau^{\min\{\gamma\sigma,~ 3-\beta\}}).$

\begin{table}[htbp]
\begin{center}
\renewcommand{\arraystretch}{1.12}
\def\temptablewidth{0.9\textwidth}
\rule{\temptablewidth}{1pt}
{\footnotesize
\begin{tabular*}{\temptablewidth}{@{\extracolsep{\fill}}c|ccccccc}
&&\multicolumn{2}{c} {$\gamma=2$}~~&\multicolumn{2}{c} {$\gamma=3$}~~&\multicolumn{2}{c} {$\gamma=5$}\\
\cline{3-4}\cline{5-6}\cline{7-8}
$\beta$~&$N$&$e(N)$& ${\rm order}_\tau$ &$e(N)$& ${\rm order}_\tau$&$e(N)$& ${\rm order}_\tau$\\
\hline
\multirowcell{8}&$ 40$~~~ & $1.46{\rm e}-1$ & $-$   & $1.00{\rm e}-1$ & $ -  $& $4.58{\rm e}-2$ & $-$\\
                &$ 80$~~~ & $1.29{\rm e}-1$ & $0.17$& $8.35{\rm e}-2$ & $0.26$& $3.35{\rm e}-2$ & $0.45$\\
         $1.1~~$&$160$~~~ & $1.14{\rm e}-1$ & $0.18$& $6.89{\rm e}-2$ & $0.28$& $2.43{\rm e}-2$ & $0.46$\\
                &$320$~~~ & $1.00{\rm e}-1$ & $0.19$& $5.65{\rm e}-2$ & $0.29$& $1.76{\rm e}-2$ & $0.46$\\
\hline
&$\min\{\gamma\sigma, 3-\beta\}$&&0.2&&0.3&&0.5\\
\hline
\multirowcell{8}&$40$~~~ & $6.64{\rm e}-3$ & $-$   & $1.01{\rm e}-2$ & $-$   & $1.89{\rm e}-2$ & $-$\\
                &$80$~~~ & $3.66{\rm e}-3$ & $0.86$& $4.88{\rm e}-3$ & $1.05$& $8.82{\rm e}-3$ & $1.10$\\
         $1.5~~$&$160$~~~& $1.95{\rm e}-3$ & $0.91$& $2.40{\rm e}-3$ & $1.02$& $4.20{\rm e}-3$ & $1.07$\\
                &$320$~~~& $1.01{\rm e}-3$ & $0.94$& $1.19{\rm e}-3$ & $1.01$& $2.04{\rm e}-3$ & 1.05\\
\hline
&$\min\{\gamma\sigma, 3-\beta\}$&&1.0&&1.5&&1.5\\
\hline
\multirowcell{8}&$40$~~~ & $6.66{\rm e}-3$ & $ -  $& $9.37{\rm e}-3$ & $  - $& $1.56{\rm e}-2$ & $-$\\
                &$80$~~~ & $3.17{\rm e}-3$ & $1.07$& $4.28{\rm e}-3$ & $1.13$& $6.85{\rm e}-3$ & $1.19$\\
         $1.9~~$&$160$~~~ & $1.53{\rm e}-3$ & $1.05$& $2.02{\rm e}-3$ & $1.09$& $3.15{\rm e}-3$ & $1.12$\\
                &$320$~~~ & $7.41{\rm e}-4$ & $1.04$& $9.65{\rm e}-4$ & $1.06$& $1.49{\rm e}-3$ & $1.08$\\
\hline
&$\min\{\gamma\sigma, 3-\beta\}$&&1.1&&1.1&&1.1\\
\hline
\end{tabular*}}
\rule{\temptablewidth}{1pt}
\end{center}
\tabcolsep 0pt \caption{$L_2$ errors and convergence orders of the difference scheme in time}\label{accuracy-order-table}
\end{table}

\section{Conclusion}\label{Sec.6}
In this paper, by using the order reduction technique, we presented a nonuniform $L1$ type difference scheme for the fractional diffusion-wave equation at the half grid points based on the piecewise linear interpolation. By virtue of the discrete
DCC kernels $p_{n-k}^{(n)}$, the unconditional $L^2$ norm convergence analysis is obtained for the proposed difference scheme. We employed the scheme
on the graded mesh to perform some numerical tests. These tests suggested that the nonuniform difference scheme (\ref{discrete-scheme})-(\ref{discrete-u-v-initialvalue}) can achieve $\min\{\gamma\sigma, 3-\beta\}$ order accuracy which confirmed the theoretical
result. In the future work, we will study the nonuniform numerical scheme for the time fractional nonlinear equation with the Caputo derivative $\beta\in (1,2).$
\section*{Acknowledgements}
We would like to acknowledge support by the State Key Program of National Natural Science Foundation of China (No. 11931003, 61833005), the National Natural Science Foundation of China (No. 41974133, 12126325, 11701081, 11701229, 11861060), ZhiShan Youth Scholar Program of SEU, China Postdoctoral Science Foundation (No. 2019M651634), High-level Scientific Research foundation for the introduction of talent of Nanjing Institute of Technology (No. YKL201856). 

The authors appreciate the anonymous referee for the valuable comments and suggestions.

\end{document}